\documentclass{amsart}[10pt]
\usepackage{mathrsfs}
\usepackage{amssymb}
\usepackage{amsfonts}
\usepackage{amsmath,color,esvect,tikz}
\usepackage[colorlinks,linkcolor=blue,urlcolor=cyan,citecolor=blue]{hyperref} 
\usepackage{genyoungtabtikz}
\usepackage[section]{placeins}

\usepackage{fullpage}
\usepackage[textwidth=0.85in, textsize=tiny]{todonotes}
\setlength{\marginparwidth}{2cm}
\setlength{\footskip}{30pt}

\usepackage[backend=biber,style=alphabetic,sorting=nyt]{biblatex}
\addbibresource{BBLX.bib}

\newcommand{\hobox}[3]{\draw (0+#1,0-#2) rectangle (1+#1,-1-#2)++(-0.5,+0.5) node {$ #3$};}

\newcommand\reduline{\bgroup\markoverwith
      {\textcolor{red}{\rule[0.5ex]{2pt}{1pt}}}\ULon}

\DeclareUnicodeCharacter{0306}{}

\newcommand{\al}{\alpha}

\newcommand{\lam}{\lambda}

\newcommand{\ep}{\varepsilon}

\newcommand{\frb}{\mathfrak{b}}
\newcommand{\fre}{\mathfrak{e}}

\newcommand{\frg}{\mathfrak{g}}
\newcommand{\frh}{\mathfrak{h}}

\newcommand{\frp}{\mathfrak{p}}

\newcommand{\fru}{\mathfrak{u}}

\newcommand{\frso}{\mathfrak{so}}
\newcommand{\frsp}{\mathfrak{sp}}

\newcommand{\caO}{\mathcal{O}}

\newcommand{\caV}{\mathcal{V}}
\newcommand{\caW}{\mathcal{W}}

\newcommand{\C}{\mathbb C}

\newcommand{\mc}[1]{\mathcal{#1}}

\newcommand{\mf}[1]{\mathfrak{#1}}
\renewcommand{\subset}{\subseteq}
\newcommand{\rar}{\rightarrow}

\newcommand{\hs}{ \mathfrak{h}^*}

\newcommand{\Ann}{\operatorname{Ann}}

\newcommand{\nodd}{}

\newcommand{\ev}{\mathrm{ev}}
\newcommand{\od}{\mathrm{od}}

\newtheorem{Thm}{Theorem}[section]
\newtheorem{Cor}[Thm]{Corollary}
\newtheorem{Pro}[Thm]{Proposition}
\newtheorem{Lem}[Thm]{Lemma}

\theoremstyle{definition}
\newtheorem{Rem}[Thm]{Remark}
\newtheorem{definition}[Thm]{Definition}

\newtheorem{example}[Thm]{Example}

\numberwithin{equation}{section}

\begin{document}

\title[Cells and associated varieties ]{{On the cells and  associated varieties of highest weight Harish-Chandra modules}}

\author{ Zhanqiang Bai, Yixin Bao, Zhao Liang and Xun Xie}

\address[Bai]{School of Mathematical Sciences, Soochow University, Suzhou 215006, China}
\email{zqbai@suda.edu.cn}

\address[Bao]{School of Sciences, Harbin Institute of Technology, Shenzhen, 518055,   China}
\email{baoyixin@hit.edu.cn }
\address[Liang]{School of Mathematical Sciences, Soochow University, Suzhou 215006,  China}
\email{20224207008@stu.suda.edu.cn}

\address[Xie]{School of Mathematics and Statistics, Beijing
Institute of Technology, Beijing 100081, China}
\email{xieg7@163.com}


\subjclass[2010]{Primary 22E47; Secondary 17B08}


\keywords{Highest weight module, Associated variety, Young tableau, Kazhdan--Lusztig cell}

\begin{abstract}
Let $G$ be a Hermitian type Lie group with the complexified Lie algebra $\mathfrak{g}$.  We use $L(\lambda)$ to denote a highest weight Harish-Chandra $G$-module with infinitesimal character $\lambda$. Let $w$ be an element in the Weyl group $W$. 
We use $L_w$ to denote a highest weight module with highest weight $-w\rho-\rho$. In this paper we  prove that there is only one Kazhdan--Lusztig right cell  such that the corresponding highest weight Harish-Chandra modules $L_w$ have the same associated variety. Then we
give a characterization for those $w$ such that  $L_w$ is a highest weight Harish-Chandra module and the associated variety of $L(\lambda)$ will be characterized by the information of the Kazhdan--Lusztig right cell containing some special $w_{\lambda}$. We also count the number of those highest weight Harish-Chandra modules $L_w$ in a given Harish-Chandra cell.


\end{abstract}

\maketitle
\setcounter{tocdepth}{1}
\tableofcontents
\section{Introduction}

Let $\mathfrak{g}$ be a finite-dimensional complex simple Lie algebra. In the 1960s,  Gelfand and  Kirillov \cite{Ge-Ki} introduced an quantity to measure the size of finitely generated $U(\mathfrak{g})$-modules, which is now known as the Gelfand--Kirillov dimension. For a finitely generated $U(\mathfrak{g})$-module $M$, Bernstein \cite{Be}  constructed a variety $V(M)$ in $\mathfrak{g}^{\ast}$, which is called the associated variety of $M$. Identifying $\mathfrak{g}^{\ast}$ with $\mathfrak{g}$ via the Killing form, the associated variety could also be viewed as a variety in $\mathfrak{g}$. It is proved that the dimension of $V(M)$ equals to the Gelfand--Kirillov dimension of $M$. The characterization of associated varieties has risen as a significant topic of infinite-dimensional representation theory of Lie algebras and Lie groups in recent years.

Fix a triangular decomposition $\mathfrak{g}=\mathfrak{n}\oplus \mathfrak{h} \oplus \bar{\mathfrak{n}}$ so that $\mathfrak{h}$ is a Cartan subalgebra of $\mathfrak{g}$ and $\mathfrak{b}=\mathfrak{h}+\mathfrak{n}$ is a Borel subalgebra.  Let $\Phi$ be the root system of $(\frg, \frh)$ with the positive system $\Phi^+$ and simple system $\Delta$. Denote by $W$ the Weyl group of $\Phi$. 
Let $G$ denote the algebraic adjoint group of $\mathfrak{g}$ and let $B$ denote the Borel subgroup of $G$ corresponding to $\mathfrak{b}$. For $\lambda\in\mathfrak{h}^{\ast}$, let $L(\lambda)$ be the highest weight module of $\mathfrak{g}$ with highest weight $\lambda-\rho$, where $\rho$ is  half the sum of positive roots. We use $L_w$ to denote the simple highest weight module with highest weight $-w\rho-\rho$.

 Joseph \cite{Jo84} proved that the associated variety $V(L_w)$ is a union of orbital varieties defined as follows. Let $\mathcal{O}\subseteq \mathfrak{g}$ be a nilpotent $G$-orbit. The irreducible components of $\overline{\mathcal{O}}\cap \mathfrak{n}$ are called {\it orbital varieties} of  $\mathcal{O}$. They all take the form $\mathcal{V}(w)=\overline{B(\mathfrak{n}\cap w\mathfrak{n})}$ for some $w\in W$. 
 The associated variety  $V(L_w)$ is  called {\it irreducible} if and only if it contains only one orbital variety. 
Tanisaki \cite{Ta} showed that there exist reducible associated varieties of the form $V(L_w)$ in type $B$ and $C$.
For a long time, people conjectured that all associated varieties of the form $V(L_w)$ are irreducible in the case of type $A$ (see \cite{BoB3} and \cite{Mel}). However,
Williamson \cite{Wi} showed that there exist counter examples. So the structure of $V(L_w)$ or $V(L(\lambda)) $ is still mysterious even in type $A$.

Suppose $G$ is a Hermitian type Lie group with Lie algebra $\mathfrak{g}_{\mathbb{R}}$ and complexified Lie algebra $\mathfrak{g}$. Let $K$ be the maximal compact Lie group of $G$ with Lie algebra $\mathfrak{k}_{\mathbb{R}}$ and complexified Lie algebra $\mathfrak{k}$. Let $\mathfrak{g} =\mathfrak{p}^-\oplus\mathfrak{k}\oplus\mathfrak{p}^+$ be the usual decomposition of $\mathfrak{g}$ as a $K_{\mathbb{C}}$-module.  When $L(\lambda)$ is a highest weight Harish-Chandra  module for $G$, the associated variety of $L(\lambda)$ is contained in $(\mathfrak{g}/(\mathfrak{k}+\mathfrak{p}^+))^*\simeq\mathfrak{p}^+$.  
It is well-known that the closures of the $K_{\mathbb{C}}$-orbits in $\mathfrak{p}^+$ form a chain
\begin{equation}\label{eqn:chain}
\{0\}=\overline{\mathcal{O}}_{0} \subsetneq  \overline{\mathcal{O}}_{1} \subsetneq  \cdots   \subsetneq  \overline{\mathcal{O}}_{r} =\mathfrak{p}^+\!,
\end{equation}
where $r$ is the real rank of $\mathfrak{g}_{\mathbb{R}}$, which is also equal to the rank of the symmetric space $G/K$.  It follows from \cite{Vo91} that 
\begin{equation}\label{eqn:av-intro}
V(L(\lambda))=\overline{\mathcal{O}}_{k}, \text{ for some }k=0,1,\dots,r.
\end{equation}

When $L(\lambda)$ is a unitary highest weight Harish-Chandra module, its associated variety is given By Bai-Hunziker \cite{BH} in a uniform formula.
Recently, a combinatorial algorithm was found to compute the Gelfand--Kirillov dimensions of all highest weight Harish-Chandra modules in \cite{BX,BXX}. From this algorithm, we can determine the associated varieties of all highest weight Harish-Chandra modules. A uniform formula for the associated varieties of highest weight Harish-Chandra modules is given in \cite{BHXZ}.

In this paper, we want to give some characterization for  highest weight Harish-Chandra modules by their associated varieties and Kazhdan--Lusztig right cells.

We refer to \cite{KL} or \S \ref{harishchandra} for the definition of Kazhdan--Lusztig right cell  and use $\stackrel{R}{\sim}$ to denote the right cell equivalence relation.

The  purpose of  this paper is to give a characterization of those $w$ such that $L_{w}$ is a Harish-Chandra module. Define
\begin{equation}\label{eqn:script-W}
  \mathcal{W}:=\{w\in W \mid -w\rho\text{ is $\Phi^+(\mathfrak{k})$-dominant}\}.
\end{equation}
It follows that  $\mathcal{W}=\{w\in W \mid \mathfrak{n} \cap \mathrm{Ad}(w) \mathfrak{n} \ \subseteq\ \mathfrak{p}^+\}$.   Note that the cardinality of $\mathcal{W}$ is $\#\mathcal{W}=\#(W/W(\mathfrak{k}))$, where $W(\mathfrak{k})$ is the Weyl group of $\mathfrak{k}$.

We will prove the following theorem.

\begin{Thm}[Theorem \ref{hermitian av} and Theorem \ref{thm:rightcell}]\label{HC-main}
	Suppose that $L_w$ and $L(\lam)$ are highest weight Harish-Chandra modules. Let  $y\in W$, then $w\stackrel{R}{\sim} y\stackrel{R}{\sim}w_{\lam}$ if and only if $V(L_w)=V(L_y)=V(L_{w_{\lam}})=V(L(\lam))=\overline{\mathcal{O}}_k$ for some $0\leq k\leq r$, where $w_{\lam}$ is the minimal length element of $W$ such that $w_{\lam}^{-1}\lam$ is antidominant.
\end{Thm}

From this theorem, we can see that there is only one right cell $\mathcal{C}_k$ (or Harish-Chandra cell) such that the corresponding highest weight Harish-Chandra modules have the same associated variety $\overline{\mathcal{O}}_k$. In order to find out this right cell, we will consider Wallach representations $L(-kc\xi+\rho)$ (see \cite{EHW} or Table \ref{constants} for the explicit value of the constant $c$ and $\xi$ is the fundamental weight orthogonal to $\Phi^+(\mathfrak{k})$ such that $(\xi,\beta^{\vee})=1$ with $\beta$ being the highest root of $\Phi^+$) for most types. Then we find out the minimal length element $w_{\lam}$ such that $w_{\lam}^{-1}\lam$ is antidominant for these Wallach representations. These special $w_{\lam}$'s are given in Theorem \ref{a-cell}, Theorem \ref{d-wlam}, Theorem \ref{wlam-2}, Theorem \ref{b-2-cell}, Theorem \ref{e6-lam}, Theorem \ref{e7-lam}, Theorem \ref{c-cell-even} and Theorem \ref{c-cell-odd}.  








In the case of $\mathfrak{sp}(n,\mathbb{R})$, $\mathfrak{so}(2,2n-1)$, $\mathfrak{e}_{6(-14)}$ and $\mathfrak{e}_{7(-25)}$, the elements of $\mathcal{W}$ or the right cell $\mathcal{C}_k$ are described in \cite{BZ,Zie18}.

We will count the size of each $\mathcal{C}_k$, especially for the case of $\mathfrak{su}(p,q)$,  $\mathfrak{so}^*(2n)$ and $\mathfrak{so}(2,2n-2)$. The results are given in Theorem \ref{size-b}, Theorem \ref{size-d} and Theorem \ref{size-d2} respectively. By using PyCox \cite{ge}, we can find out all the elements in a given right cell $\mathcal{C}_k$, see Remark \ref{pycox}.

This paper is organized as follows. In \S \ref{pre}, we give some necessary preliminaries about Kazhdan--Lusztig  cells and associated varieties of highest weight Harish-Chandra modules. In \S \ref{AV-cell}, we recall the results in \cite{BXX} about associated varieties  and prove Theorem \ref{HC-main}. In \S \ref{a-thm}-\ref{bc-thm}, we determine those special $w_\lambda$'s and count the size of each $\mathcal{C}_k$.

\section{Preliminaries}\label{pre}

In this section, we recall the definitions of Gelfand--Kirillov dimensions and associated varieties of highest weight modules.  More details can be found in Vogan \cite{Vo78}.

\subsection{GK dimension  and associated variety} Let $\mathfrak{g}$ be a finite-dimensional simple Lie algebra. Let $M$ be a $U(\mathfrak{g})$-module generated by a finite-dimensional subspace $M_0$. Let $\{U_{n}(\mathfrak{g})\}_{n\geq 0}$ be the standard filtration of $U(\mathfrak{g})$. The graded module of $M$ is defined by $\text{gr} (M)=\bigoplus\limits_{n=0}^{\infty} \text{gr}_n M$ (here $M_n=U_n(\mathfrak{g})M_0$, $\text{gr}_n M=M_n/{M_{n-1}}$), which is a finitely generated $\text{gr}(U(\mathfrak{g}))$($\approx S(\mathfrak{g})$)-module.
\begin{definition}   Then the \textit{Gelfand--Kirillov dimension} of $M$  is defined by$$
	\operatorname{GKdim} M = \overline{\lim\limits_{n\rightarrow \infty}}\frac{\log\dim( U_n(\mathfrak{g})M_{0} )}{\log n}.
	$$
\end{definition}
It is well-known that the above definition  is independent of  the choice of  $M_0$ (see \cite{NOT}).

The \textit{associated variety} of $M$ is defined by
$$
V(M):=\{X\in \mathfrak{g}^* \mid p(X)=0\ \text{for~ all~} p\in \operatorname{Ann}_{S(\mathfrak{g})}(\operatorname{gr} M)\}.
$$
From  \cite{NOT}, we have $\dim V(M)=\operatorname{GKdim} M$.

Let $G$ be a simple real Lie group with the complexified Lie algebra $\mathfrak{g}$ and a maximal compact subgroup $K$ with complexified Lie algebra $\mathfrak{k}$, so that $G/K$ is a Hermitian symmetric space. Let $\mathfrak{g} =\mathfrak{p}^-\oplus\mathfrak{k}\oplus\mathfrak{p}^+$ be the usual decomposition of $\mathfrak{k}$-modules, where $\mathfrak{p}^\pm$ are abelian. Then $\mathfrak{q}=\mathfrak{k}\oplus\mathfrak{p}^+$ is a maximal parabolic subalgebra of $\mathfrak{g}$ with nilradical $\mathfrak{p}^+$. Let $\mathfrak{h}$ be a Cartan subalgebra of $\mathfrak{k}$ and $\mathfrak{g}$. Let $\Phi$ be the root system of $(\frg, \frh)$ with the positive system $\Phi^+$ and simple system $\Delta$. 

Let $F(\lambda)$ be the finite-dimensional irreducible $\mathfrak{k}$-module of highest weight $\lambda-\rho \in \mathfrak{h}^*$. By letting $\mathfrak{p}^+$ act trivially on $F(\lambda)$, we may consider $F(\lambda)$ as a module of $\mathfrak{q}$. The {\it generalized Verma module} $N(\lambda)$ is defined to be
\[
N(\lambda):=U(\mathfrak{g})\otimes_{U(\mathfrak{q})}F(\lambda).
\]
The simple quotient of $N(\lambda)$ is denoted by $L(\lambda)$. We call it  a \textit{highest weight Harish-Chandra module}.   The classification of (unitary) highest weight Harish-Chandra modules had been done by Enright--Howe--Wallach \cite{EHW} and Jakobsen \cite{Ja1} independently.


\begin{Pro}[Vogan \cite{Vo91}]\label{p3.1}
	Let $L(\lambda)$ be a highest weight Harish-Chandra module. Then we have $V(L(\lambda))=\overline{\mathcal{O}}_{k(\lambda)}$ for some integer $k(\lambda)\geq 0$.
\end{Pro}


Let $\beta$ denote the highest  root of the root system $\Phi$ and $\rho$ be half the  sum of positive roots in $\Phi^+$.  Let $(-, -): \mathfrak{h} \times \mathfrak{h}^* \to \mathbb{C}$ be the canonical pairing.
The dual Coxeter number $h^\lor$ of the root system $\Phi$  is defined by
$$h^\lor:=(\rho,\beta^\lor)+1.$$
 The Wallach set is a special set consisting of unitary highest weight modules with highest weights:
 $$W_s=\{-kc\xi\mid k\in \mathbb{Z}  \text{~and~} 0\leq k\leq r-1 \},$$
 where $\xi$ is the fundamental weight orthogonal to $\Phi(\mathfrak{k})$ and $(\xi,\beta^{\vee})=1$, and $c$ is a constant associated to the Hermitian symmetric space $G/K$.  From  \cite{BH},
 we have the following table.




\begin{table}[htbp]
	\centering
	\renewcommand{\arraystretch}{1.4}
	\setlength\tabcolsep{5pt}
	
{\caption{Some constants associated to $G/K$}
\label{constants}
\begin{tabular}{cccc}
\hline  $\mathfrak{g}_{\mathbb{R}}=\operatorname{Lie}(G)$ &   $r$ & $c$ & $h^\vee-1$ \\ \hline 
 $\mathfrak{su}(p,q)$, $n=p+q-1$ & $\min\{p,q\}$ &  $1$ & $n$  \\  
    $\mathfrak{sp}(n,\mathbb{R})$  & $n$ &   $1/2$ & $n$   \\  
    $\mathfrak{so}^{*}(2n)$, $n=2m$  & $m$  & $2$ & $2n-3$ \\ 
   $\mathfrak{so}^{*}(2n)$, $n=2m+1$  & $m$  & $2$ & $2n-3$ \\ 
   $\mathfrak{so}(2,2n-1)$  & $2$  &  $n-3/2$ & $2n-2$ \\ 
  $\mathfrak{so}(2,2n-2)$  & $2$ &  $n-2$& $2n-3$ \\ 
  $\mathfrak{e}_{6(-14)}$   & $2$ &  $3$ & $11$ \\  
  $\mathfrak{e}_{7(-25)}$  & $3$ &  $4$ & $17$ \\
\hline

\end{tabular}}

\end{table}

When $L(\lambda)$ is a unitary highest weight Harish-Chandra module, the explicit value of $k(\lambda)$ had been given by Bai-Hunziker \cite{BH} in a simple uniform way. 

	\begin{Pro}[{\cite{BH}}]\label{C: dimYk}
		Suppose $L(\lambda)$ is a unitary highest weight Harish-Chandra module with highest weight $\lambda-\rho$. We denote $z=z(\lambda)=(\lambda,\beta^{\vee})$, then
		$$\operatorname{GKdim} L(\lambda)=
		\left\{
		\begin{array}{ll}
		rz_{r-1}, & \text{if~}  z<z_{r-1}\\
		kz_{k-1}, & \text{if~}  z=z_{k}=(\rho,\beta^{\vee})-kc, 1\leq k\leq r-1\\
		0, & \text{if~} z=z_{0}=(\rho,\beta^{\vee}).
		\end{array}
		\right.$$
		
		Denote $k=k(\lambda):=-\frac{(\lambda-\rho,  \beta^{\vee} )}{c}$.   Then
		\begin{enumerate}
			\item  If $k>r-1$, we have $\operatorname{GKdim} L(\lambda)=rz_{r-1}=\frac{1}{2}\dim(G/K).$
			\item If $0\leq k\leq r-1$, then $k$ is a non-negative integer and $$\operatorname{GKdim} L(\lambda)=k((\rho, \beta^{\vee})-(k-1)c)=kz_{k-1}=\dim \overline{\mathcal{O}}_{k(\lambda)}.$$
		\end{enumerate}
		The associated variety of $L(\lambda)$ is $\overline{\mathcal{O}}_{k(\lambda)}$.

	\end{Pro}

When $L(\lambda)$ is just a highest weight Harish-Chandra module (unitary or not), the explicit value of $k(\lambda)$ had been given in \cite{BX, BXX}.

\subsection{Kazhdan--Lusztig cell and Harish-Chandra cell}\label{harishchandra}

The Weyl group $ W  $ of $ \mf{g} $ is a Coxeter group generated by $ S=\{s_\al\mid\al\in\Delta \} $. Let $\ell(-)$ be the \textit{length function} on $W$. The Hecke algebra $ \mc{H} $ over $ \mathcal{A} :=\mathbb{Z}[v,v^{-1}]$ is generated by $ T_w $, $ w\in W $ with relations \[
T_wT_{w'}=T_{ww'} \text{ if }\ell(ww')=\ell(w)+\ell(w'),
\]
\[
\text{and }(T_s+v^{-1})(T_s-v)=0 \text{ for any }s\in S.
\]
The unique elements $ C_w $ such that
\[
\overline{C_w}=C_w, \text{~and~}C_w\equiv T_w  ~(\mathrm{mod}{~\mc{H}_{<0}})
\]
is known as the Kazhdan--Lusztig basis of $ \mc{H} $, where $ \bar{\,} :\mc{H}\rar\mc{H}$ is the bar involution such that $$ \bar{q}=q^{-1},~ \overline{T_w} =T_{w^{-1}}^{-1}, \text{~and~}  \mc{H}_{<0}=\bigoplus_{w\in W}\mathcal{A}_{<0}T_w $$ with $ \mathcal{A}_{<0}=v^{-1}\mathbb{Z}[v^{-1}]$.

If $ C_y $ occurs in the expansion of $ hC_w $ (resp. $C_wh$) with respect to the KL-basis for some $ h\in\mc{H} $,  we write $ y\leftarrow_L w $ (resp. $ y\leftarrow_R w $). Extend $ \leftarrow_L $ (resp. $ \leftarrow_R $) to a preorder $ \leq_L $ (resp. $\leq _R$) on $ W $. For $x, w\in W$, write $x \leq_{LR} w$ if there exists $x=w_0, w_1, \cdots, w_n=w$ such that for every $0\leq i<n$ we have  $w_i\leq_L w_{i+1}$ or $w_i\leq_R w_{i+1}$. Let $\stackrel{L}{\sim}$, $\stackrel{R}{\sim}$, $\stackrel{LR}{\sim}$ be the equivalence relations associated with $\leq_L$, $\leq_R$, $\leq_{LR}$. The equivalence classes on $W$ for $\stackrel{L}{\sim}$, $\stackrel{R}{\sim}$, $\stackrel{LR}{\sim}$ are called \textit{left cells}, \textit{right cells} and \textit{two-sided cells} respectively. From \cite[\S 8.1]{Lus03}, we have $x\stackrel{R}{\sim}y$ if and only if $x^{-1}\stackrel{L}{\sim}y^{-1}$.

Now we recall the concept of
Harish-Chandra cells, which is defined by Barbasch--Vogan \cite{BV83-book}.   Let $X,Y$ be two irreducible (Harish-Chandra) modules  with the same infinitesimal character. We write $X>Y$ if there exists a finite-dimensional $\mathfrak{g}$-module $F$ such that $Y$ appears a sub-quotient of $X\otimes F$. We write $X\sim Y$ if both $X>Y$ and $Y>X$. The equivalence classes for the relation $\sim$ are called cells of (Harish-Chandra) modules or (Harish-Chandra) cells.
\begin{Pro}[{\cite[Prop. 2.9]{BV83}}]\label{HC-cell}
    Two highest weight Harish-Chandra modules $L_w$ and $L_y$ belong to the same Harish-Chandra cell with infinitesimal character $\rho$ if and only if  $w\stackrel{R}{\sim} y$. 
\end{Pro}

 It follows
 from the definition that if a Harish-Chandra  cell  contains one highest weight Harish-Chandra module, then all representations in this cell are highest weight Harish-Chandra modules. Also the associated variety is constant on any given Harish-Chandra cell.
 
  From \cite[\S 3.2]{BZ}, we know that a Harish-Chandra cell $\mathcal{C}_{HC}$ will determine a representation of $W$  which is spanned
by $\mathcal{C}_{HC}$ and will be denoted by $V(\mathcal{C}_{HC})$. Suppose $\mathcal{O}^{\mathbb{C}}$ is the complex nilpotent orbit appeared in the associated variety of the annihilator of the highest weight Harish-Chandra module $L(\lambda)$. Then  the  irreducible $W$-module $\pi(\caO^\C,1)$  for 
  $\mathcal{O}^{\mathbb{C}}$ via the Springer correspondence is a $W$-submodule of $V(\mathcal{C}_{HC})$. Thus we have $\#\mathcal{C}_{HC}=\dim V(\mathcal{C}_{HC})\geq \dim \pi(\caO^\C,1)$.

\section{Associated varieties of highest weight modules}\label{AV-cell}

%
%
In this section, we  recall the relation between associated varieties and cells.

\subsection{Right cells and associated varieties}

 Recall that any subset $I\subset\Delta$ generates a subsystem $\Phi_I\subset\Phi$ with corresponding parabolic subalgebra $\mathfrak{p}_I=\mathfrak{l}_I\oplus \mathfrak{u}_I\supset\frb$ of $\frg$.

We say $\lambda\in\frh^*$ is \emph{$\Phi_I^+$-dominant} if and only if $(\lambda, \alpha^\vee)\in\mathbb{Z}_{>0}$ for all $\alpha\in I$. So $L(\lambda)$ is a highest weight Harish-Chandra module if and only if $\lambda$ is $\Phi^+(\mathfrak{k})$-dominant.

\begin{Pro}[{\cite{BMXX}}]\label{HC}
	Let $\frg$ be a simple complex Lie algebra and $\lambda\in \frh^*$. Then $\lambda$ is $\Phi_I^+$-dominant if and only if $V(L(\lambda))\subset\fru_I$.
\end{Pro}

Denote $I_w=\Ann(L_w)$. By Borho-Brylinski \cite{BoB1} and Joseph \cite{Jo85}, we know that $V(I_w)=V(U(\mathfrak{g})/I_w)$ is the closure of a single special (in the sense of Lusztig \cite{Lu79}) nilpotent orbit. From \cite{KL} and \cite{BV83}, there is a bijection between special nilpotent orbits of $\frg$ and two-sided cells of the Weyl group $W$, see also \cite{Ta}. The special orbits are listed in Collingwood-McGovern \cite{CM}. For a nilpotent orbit $\mathcal{O}$, denote by $\mathrm{Irr}(\overline{\mathcal{O}}\cap \mathfrak{n})$ the set of irreducible components in $\overline{\mathcal{O}}\cap \mathfrak{n}$.  Each irreducible component of $\overline{\mathcal{O}}\cap \mathfrak{n}$ is called an {\it orbital variety}, which can be written as $\mathcal{V}(w):=\overline{B(\mathfrak{n}\cap w\mathfrak{n})}$ for some $w\in W$ \cite{Ta}.  

\begin{Pro}[{\cite[Lem. 6.6]{Jo84}} and  {\cite[Cor. 6.3]{BoB3}}]\label{Avconstant}
$V(L_w)$ is constant on each KL right cell.	
	
\end{Pro}

The following result was conjectured by Tanisaki \cite[Conj.3.4]{Ta}. The case of type $A$ was proved by Borho-Brylinski \cite{BoB3}, while the case of the other classical types was proved by McGovern \cite{Mc00}.

\begin{Pro}\label{Ta}
	Suppose that $\frg$ is classical. Let  $\mathscr{C}$ be the two-sided cell corresponding to a special nilpotent orbit $\mathcal{O}$. Denote by $\mathscr{C}_R$  the set of right cells contained in $\mathscr{C}$. Then there exists a bijection from $\mathscr{C}_R$ to $\mathrm{Irr}(\overline{\mathcal{O}}\cap \mathfrak{n})$ $(w\rightarrow Y_w)$ and an ordering $\prec$ on $\mathrm{Irr}(\overline{\mathcal{O}}\cap \mathfrak{n})$ such that $V(L_w)=Y_w \cup \tilde Y_w$, where $\tilde{Y}_w$ is a union of some orbital varieties $Y$ in $\mathrm{Irr}(\overline{\mathcal{O}}\cap \mathfrak{n})$  with $Y\prec Y_w$.
\end{Pro}

\begin{Rem}\label{conjec} $Y_w$ depends on the right cell containing $w$. It may not be $\mathcal{V}(w)$.
	In Tanisaki's original conjecture \cite[Conj.3.4]{Ta}, $\frg$ is not necessarily classical, where the cases of type $E_6$ and $G_2$ were proved by himself. For the case of type $F_4$, Tanisaki showed that the conjecture is true for nine special nilpotent orbits among all the eleven ones. The cases of type $E_7, E_8, F_4$ are still unknown.
\end{Rem}

\begin{Thm}\label{right cell-equi}
	Suppose $\mathfrak{g}$ is of classical type. Let  $w,y\in W$, then $w\stackrel{R}{\sim} y$ if and only if $V(L_w)=V(L_y)$.
\end{Thm}

\begin{proof}
	If $w\stackrel{R}{\sim} y$, we can get $V(L_w)=V(L_y)$ by Proposition \ref{Avconstant}. Now assume that $V(L_w)=V(L_y)$. In view of \cite[Cor. 4.11]{BoB1} and \cite[Thm. 10.2.2]{CM}, $V(I_w)=V(I_y)$ is the closure of a nilpotent orbit $\mathcal{O}$. Let $\mathscr{C} $ be the two-sided cell corresponding to $\mathcal{O}$ in the Springer correspondence. With $V(L_w)=V(L_y)$, Proposition \ref{Ta} yields $Y_w \cup \tilde{Y}_w=Y_y \cup \tilde{Y}_y$. This forces $Y_w=Y_y$. Hence $w\stackrel{R}{\sim} y$. 
	
\end{proof}

\begin{Rem}\label{EG} Theorem \ref{right cell-equi} will hold for all  simple Lie algebras once Tanisaki's original conjecture \cite[Conj. 3.4]{Ta} is proved completely.
\end{Rem}


\begin{Pro}[{\cite[Prop. 4.2]{BHXZ}}]\label{simplecell}
    Suppose $\Phi$ is simply laced and let $w \in  \mathcal{W}$. For $k=0,1,\dots,r$, the set $$\{L_w\mid  w\in\mathcal{W} \text{ and }V(L_w)=\overline{\mathcal{O}}_k\}$$ is a Harish-Chandra cell and the corresponding $W$-module is irreducible. In fact, we have $V(L_w)=\mathcal{V}(w)$ for any $w\in \mathcal{W}$.
\end{Pro}

\begin{Thm}\label{hermitian av}
	Suppose that $L_w$ is a highest weight Harish-Chandra module. Let  $y\in W$, then $w\stackrel{R}{\sim} y$ if and only if $V(L_w)=V(L_y)$.
\end{Thm}

\begin{proof}
From Theorem \ref{right cell-equi} and Remark \ref{conjec}, we only need to prove the case of $\mathfrak{g}=E_7$. Now we assume that $V(L_y)=V(L_w)\subset \mathfrak{p}^+$. Then by Proposition \ref{HC}, we have  $y,w\in \mathcal{W}$ and $L_y$ is also a highest weight Harish-Chandra module.



Now $r=3$,
thus  $V(L_y)=V(L_w)=\overline{\caO}_i$ for some $0\leq i\leq 3$. By Proposition \ref{HC-cell} and \ref{simplecell}, we must have  $w\stackrel{R}{\sim} y$ since there is only one right cell (Harish-Chandra cell) for each case.

\end{proof}
\begin{example}\label{e7-cell-eq}
    
For  $\mathfrak{g}_{\mathbb{R}}=\fre_{7(-25)}$,  the cardinality of $\caW$ is 56 and $r=3$.  So there are four $K_{\mathbb{C}}$-orbits $\caO_0,\caO_1$,$\caO_2$ and $\caO_3$ in $\frp^+$. From \cite[\S 3]{BZ}, we know that there is an irreducible $W$-module $\pi(\caO_i^\C,1)$  for 
 each complex nilpotent $G$-orbit $\mathcal{O}_i^{\mathbb{C}}$ via the Springer correspondence.
Table \ref{e7} contains some data for $\fre_{7(-25)}$ (using the notation of \cite{Ca85} where $\phi_{a,b}$ denotes the corresponding  irreducible $W$-module $\pi(\caO_i^\C,1)$ which has dimension $a$ and degree $b$).

\begin{table}[htp]
\centering
\renewcommand{\arraystretch}{1.5}
\setlength\tabcolsep{10pt}
\caption{$K_{\mathbb{C}}$-orbits for $\mathfrak{g}_{\mathbb{R}}=\fre_{7(-25)}$}\label{e7} 
\begin{tabular}{cccc}
\hline
$i$  &  $\dim(\caO_i)$    & $\pi(\caO_i^\C,1)$  &  $\caO_i^\C$  \\
\hline 
$0$  & $0$    & $\phi_{1,63}$  & $0$   \\
$1$  & $11$ &   $\phi_{7,46}$  &  $A_1$  \\
$2$  & $16$ &   $\phi_{27,37}$  &  $2A_1$\\
$3$  & $16$ &   $\phi_{21,36}$  &  $(3A_1)''$\\
\hline
\end{tabular}

\end{table}
There are four Harish-Chandra cells $\mathcal{C}_0=\{\C \}, \mathcal{C}_1$, $\mathcal{C}_2$ and $\mathcal{C}_3$.  We have $\#\mathcal{C}_0=1$. By \S \ref{harishchandra} and Table \ref{e7}, we have $\#\mathcal{C}_1\geq 7$, $\#\mathcal{C}_2\geq 27$ and $\#\mathcal{C}_3\geq 21$, but $\# \caW=56$, so equality holds.  Thus Proposition  \ref{simplecell} holds for this case.
\end{example}

\subsection{Translation functor and associated varieties}
 For $\mu\in\mathfrak{h}^*$, define 
\begin{equation*}
\Phi_{[\mu]}:=\{\alpha\in\Phi\mid (\mu, \alpha^\vee)\in\mathbb{Z}\}.
\end{equation*}
 Set 
\[
W_{[\mu]}:=\{w\in W\mid w\mu-\mu\in \mathbb{Z}\Phi\}.
\]
Then $\Phi_{[\mu]}$ is a root system with Weyl group $W_{[\mu]}$. 
 Let $\Delta_{[\mu]}$ be the simple system of $\Phi_{[\mu]}$. Set $J=\{\alpha\in\Delta_{[\mu]}\mid(\mu, \alpha^\vee)=0\}$. Denote by $W_J$ the Weyl group generated by reflections $s_\alpha$ with $\alpha\in J$. Let $\ell_{[\mu]}$ be the length function on $W_{[\mu]}$. Thus $\ell_{[\mu]}=\ell$ when $\mu$ is integral. Put
\begin{equation*}\label{ceq1}
	W_{[\mu]}^J:=\{w\in W_{[\mu]}\mid \ell_{[\mu]}(ws_\alpha)=\ell_{[\mu]}(w)+1\ \mbox{for all}\ \alpha\in J\}.
\end{equation*}
Thus $W_{[\mu]}^J$ consists of the shortest representatives of the cosets $wW_J$ with $ w\in W_{[\mu]} $. When $\mu$ is integral, we simply write $W^J:=W_{[\mu]}^J$ .

A weight $ \mu\in\hs $ is called \textit{anti-dominant} if $ ({\mu},{\alpha^{\vee}}) \notin\mathbb{Z}_{>0}$ for all $ \al\in\Phi^+ $. For any $\lambda\in\mathfrak{h}^*$, there exists a unique anti-dominant weight $\mu\in\hs$ and a unique $w_{\lambda}\in W_{[\mu]}^J$ such that $\lambda=w_{\lambda}\mu$. 

\begin{Pro}[{\cite[Prop. 3.5]{Hum08}}]\label{anti}
    Let $\lambda\in \mathfrak{h}^*$, with corresponding root system $\Phi_{[\lambda]}$ and Weyl group $W_{[\lambda]}$. Let $\Delta_{[\lambda]}$ be the simple system of $\Phi_{[\lambda]} \cap \Phi^+$  in $\Phi_{[\lambda]}$.  Then $\lambda$ is antidominant if and only if one of the
following three equivalent conditions holds:
\begin{enumerate}
    \item $(\lambda, \alpha^\vee)\leq 0$ for all $\alpha \in \Delta_{[\lambda]}$;
    \item $\lambda\leq s_{\alpha}\lambda$ for all $\alpha \in \Delta_{[\lambda]}$;
    \item  $\lambda\leq w\lambda$ for all $w \in W_{[\lambda]}$.
\end{enumerate}
Therefore, there is a unique antidominant weight in the orbit $W_{[\lambda]}\lambda$.
\end{Pro}

Let  $ \mathcal{O} $ be the BGG category consists of $ \mathfrak{g} $-modules which are  semisimple as $ \mathfrak{h} $-modules, finitely generated as $ U(\mathfrak{g}) $-modules and locally $ \mathfrak{n} $-finite. A \textit{block} of $ \mathcal{O} $ is an indecomposable summand of $ \mathcal{O} $ as an abelian subcategory. Let $ \mathcal{O} _\lambda $  be the block containing  the simple module $ L(\lambda) $.

For two integral weights $ \lambda,\mu \in \mathfrak{h}^*$, we set $ \gamma=\mu-\lambda $. Then we can find a weight $ \bar{\gamma} \in W\gamma$ such that $ \langle {\bar{\gamma}},{\alpha} \rangle \geq 0$ for all $ \alpha \in\Delta^+$. Then $ L(\bar{\gamma}) $ is finite dimensional. The Jantzen's \textit{translation functor} (see Jantzen \cite{Jantzen79Lec750} or Humphreys \cite{Hum08}): $$  T_\lambda^\mu : \mathcal{O}_\lambda \rightarrow \mathcal{O}_\mu $$ is   an exact functor given by $ T_\lambda^\mu(M):=\text{ pr}_\mu (L(\bar{\gamma})\otimes M) $, where $ M\in   \mathcal{O}_\lambda $ and $ \text{ pr}_\mu $ is the natural projection $ \mathcal{O} \rightarrow \mathcal{O}_\mu $.

From Borho-Brylinski-MacPherson \cite[Lem. 5.2]{BBM}, we know that the associated variety of an irreducible $\mathfrak{g}$-module is invariant under Jantzen's translation functor. In particular, we have the following lemma from Bai-Xie \cite[Cor. 3.3]{BX} and  \cite[Cor. 4.3.1]{BoB3}.

\begin{Lem}\label{l3.3}
	For any integral weight $\lambda$, we have
	\[
	T_{-w_{\lambda}\rho-\rho}^\lambda (L_{w_{\lambda}})=L(\lambda)
	\]
	and $V(L(\lambda))=V(L_{w_{\lambda}})\supseteq\caV(w_\lambda)$, where $w_{\lambda}\in W $ is the unique element of minimal length such that $ w_{\lambda}^{-1}\lambda $ is antidominant.
\end{Lem}




\begin{Thm}\label{thm:rightcell}
	Suppose that $L(\lambda)$ is a highest weight Harish-Chandra module with integral highest weight $\lambda-\rho$. Suppose $\mu\in\frh^*$ is also integral. Then $V(L(\lambda))=V(L(\mu))$ if and only if $w_\lambda\stackrel{R}{\sim}w_\mu$.
\end{Thm}
\begin{proof}
    From $V(L_{w_{\lambda}})=V(L(\lambda))=V(L(\mu))=V(L_{w_{\mu}})\subset \mathfrak{p}^+$,  we will have $V(L_{w_{\lambda}})=V(L_{w_{\mu}})\subset \mathfrak{p}^+$.
    Then by Theorem \ref{hermitian av}, we have $w_\lambda\stackrel{R}{\sim}w_\mu$.

    The other direction follows from Proposition \ref{Avconstant}.
    
\end{proof}

Recall that the Wallach representations $L(-kc\zeta+\rho)$ are unitarizable highest weight Harish-Chandra modules with $V(L(-kc\zeta+\rho))=\overline{\mathcal{O}}_k$ for $k=0,1,\ldots, r$ (see Joseph \cite{Jo92} or Bai-Hunziker \cite{BH}). Thus we only need to consider these Wallach representations.

\subsection{Associated varieties of highest weight HC modules}

For a Young diagram $P$ with shape $p$, we use $ (k,l) $ to denote the box in the $ k $-th row and the $ l $-th column.
We say the box $ (k,l) $ is \emph{even} (resp. \emph{odd}) if $ k+l $ is even (resp. odd). Let $ p_i ^{\ev}$ (resp. $ p_i^{\od} $) be the number of even (resp. odd) boxes in the $ i $-th row of the Young diagram $ P $.
One can easily check that \begin{equation}\label{eq:ev-od}
	p_i^{\ev}=\begin{cases}
		\left\lceil \frac{p_i}{2} \right\rceil,&\text{ if } i \text{ is odd},\\
		\left\lfloor \frac{p_i}{2} \right\rfloor,&\text{ if } i \text{ is even},
	\end{cases}
	\quad p_i^{\od}=\begin{cases}
		\left\lfloor \frac{p_i}{2} \right\rfloor,&\text{ if } i \text{ is odd},\\
		\left\lceil \frac{p_i}{2} \right\rceil,&\text{ if } i \text{ is even}.
	\end{cases}
\end{equation}
Here for $ a\in \mathbb{R} $, $ \lfloor a \rfloor $ is the largest integer $ n $ such that $ n\leq a $, and $ \lceil a \rceil$ is the smallest integer $n$ such that $ n\geq a $. For convenience, we set
\begin{equation*}
	p^{\ev}=(p_1^{\ev},p_2^{\ev},\cdots)\quad\mbox{and}\quad p^{\od}=(p_1^{\od},p_2^{\od},\cdots).
\end{equation*}

For  $ x=(x_1,x_2,\cdots,x_n)\in \mathrm{Seq}_n (\Gamma) $, we denote 
\begin{align*}
		{x}^-=&(x_1,x_2,\cdots,x_{n-1}, x_n,-x_n,-x_{n-1},\cdots,-x_2,-x_1),\\
		{}^-{x}=&(-x_n,-x_{n-1},\cdots, -x_2,-x_1,x_1,x_2,\cdots, x_{n-1}, x_n).
	\end{align*}

For $i<j\in \mathbb{Z}$, denote $[i,j]=\{i,i+1,\cdots,j-1,j\}$.
When the root system $\Phi=B_n$ or $C_n$ with $n>1$, the Weyl group is $W=W_n$, where $ W_n $ is the group consisting of permutations $ w $ of the set $[-n, n]$
	such that $ w(-i)=-w(i) $ for all $ i \in[1,n]$. Denote $s_i:=s_{\alpha_i}$ for $1\leq i\leq n-1$. Let $ t\in W_n$ be the element with $ t=(-1,2, \dots, n):=(1,-1)$. Then $ (W_n , T_n)$ is a Coxeter system, with $T_n=\{t,s_1,\dots s_{n-1}\} $.  Let $ u\in W_n$ be the element with $ u=(-2,-1,3, \dots, n):=(1,-2)$. Then $ (W'_n , T'_n)$ is a Coxeter system, with $T'_n=\{u,s_1,\dots s_{n-1}\} $.
	
The Weyl group $ (W,S) $ of $ B_n $ is isomorphic to $ (W_n, T_n) $ via
\[
s_{\ep_i-\ep_{i+1}}\mapsto s_{n-i},0\leq  i\leq n-1,\text{ and } s_{\ep_n}\mapsto t.
\]

The Weyl group $ (W,S) $ of $ C_n $ is isomorphic to $ (W_n, T_n) $ via
\[
s_{\ep_i-\ep_{i+1}}\mapsto s_{n-i},0\leq  i\leq n-1,\text{ and } s_{2\ep_n}\mapsto t.
\]

The Weyl group $ (W,S) $ of  $ D_n $ is isomorphic to $ (W_n', T_n') $  via
\begin{equation}\label{isom}
    s_{\ep_i-\ep_{i+1}}\mapsto s_{n-i},1\leq  i\leq n-1,\text{ and } s_{\ep_{n-1}+\ep_n}\mapsto u.
\end{equation}

\begin{Lem}\label{lambda-w}
Assume that $ \lambda $ is an integral weight with $  \lambda=(\lam_1,\cdots,\lam_n)$.
\begin{itemize}
\item [(1)]There is a unique element $ w\in W_n $ such that
\begin{itemize}
\item[(i)] if $ \lam_i\neq 0 $ then $ \sigma_i' $ has the same sign with $ \lam_i $;
\item [(ii)]if $\lam_i= 0 $ then $ \sigma_i' <0$;
\item[(iii)] if $ \lam_i<\lam_j $, then $ \sigma_i'<\sigma_j'  $;
\item[(iv)] if $\lam_i=\lam_j $ and $ i<j $, then $ \sigma_i'<\sigma_j'  $.
\end{itemize}
where $ (\sigma_1',\sigma_2',\cdots, \sigma_n')=(-w^{-1}(n), -w^{-1}(n-1),\cdots, -w^{-1}(1)) $.
\item [(2)] The element $ w $ in (1) is precisely the minimal length element such that $ w^{-1}\lambda $ is antidominant. We denote is by $w_{\lambda}$.

   \item [(3)] Let $ w $ be the element in (1) and $t=(1,-1)\in W_n$. Then one of $ w $ and $ wt $ belongs to $ W_n' $, and we denote it by $ w_{\lambda} $. Then $  w_{\lambda}  $ is precisely the minimal length element such that $  w_{\lambda}^{-1}\lambda $ is antidominant.

\end{itemize}
\end{Lem}
\begin{proof}
See the arguments in the proof of \cite[Lem. 5.2 and Lem. 5.3]{BXX}

\end{proof}

\begin{Rem}

Let $ \mu $ be the antidominant weight in the orbit $ W\lambda $, with $ \mu=(\mu_1,\cdots,\mu_2) $.
One can obtain a (reduced) string diagram by
connecting the equal numbers, in different rows and  one time for each number, of the diagram\[
\begin{matrix}
\lam_1&\lam_2&\cdots &\lam_{n-1}&\lam_n &-\lam_{n}&-\lam_{n-1}&\cdots&-\lam_2&-\lam_1\\\mu_1&\mu_2&\cdots &\mu_{n-1}&\mu_n&-\mu_{n} &-\mu_{n-1}&\cdots&-\mu_2&-\mu_1.
\end{matrix}
\]
Then such a  string diagram represents an element $ w\in W_n $. By construction, we have $ w^{-1}\lambda=\mu $, and $ w $ satisfies (i) and (iii) of (1). If in the same time we require $ w $  to satisfy (ii) and (iv),  then the string diagram has a minimal number of crossings, and hence $ w $ will be the  element with  minimal length such that $ w^{-1}\lambda =\mu$. 

Let $w$ be the element found in (1) of Lemma \ref{lambda-w}. We may have $w\notin W'_n$ but $wt\in W'_n$. By \cite[Rem. 4.11]{BXX}, we have $(wt)(k)=-w(k)$ when $|w(k)|=1$ and $(wt)(k)=w(k)$ when $|w(k)|>1$. For example, if $w=(2,1,-3,4)$, we will have $wt=(2,-1,-3,4)\in W'_n$.
\end{Rem}

\begin{example}

Let $ \mf{g}=\mathfrak{so}(11, \mathbb{C})$ and $ \lambda=(2,-1,1,-3,1) $. Then from Proposition \ref{anti}, the antidominant weight in $ W\lambda $ is $ \mu= (-3,-2,-1,-1,-1) $. We have a string diagram:
\begin{center}
\begin{tikzpicture}
 \draw(0.2,-0.2) node{$ \mu^-: $};
\filldraw (1,0) \nodd node[below]{$ -3 $};  \draw (1,0)--(4,3);
\filldraw (2,0) \nodd node[below]{$ -2$};  \draw (2,0)--(10,3);
\filldraw (3,0) \nodd node[below]{$ -1 $};	\draw (3,0)--(2,3);
\filldraw (4,0) \nodd node[below]{$ -1 $};	\draw (4,0)--(6,3);
\filldraw (5,0) \nodd node[below]{$ -1 $};	\draw (5,0)--(8,3);

\filldraw (6,0) \nodd node[below]{$ 1$};	\draw (6,0)--(3,3);
\filldraw (7,0) \nodd node[below]{$ 1 $};	\draw (7,0)--(5,3);
\filldraw (8,0) \nodd node[below]{$ 1 $};	\draw (8,0)--(9,3);
\filldraw (9,0) \nodd node[below]{$ 2 $};	\draw (9,0)->(1,3);
\filldraw (10,0) \nodd node[below]{$ 3 $}; \draw (10,0)->(7,3);
\draw[dashed](5.5,0)--(5.5,3);
 \draw (0.2,3.3) node{$ \lambda^-: $};
\filldraw (1,3) \nodd node[above]{$2 $};
\filldraw (2,3) \nodd node[above]{$ -1$};
\filldraw (3,3) \nodd node[above]{$ 1 $};
\filldraw (4,3) \nodd node[above]{$ -3 $};
\filldraw (5,3) \nodd node[above]{$ 1 $};

\filldraw (6,3) \nodd node[above]{$ -1$};
\filldraw (7,3) \nodd node[above]{$ 3 $};
\filldraw (8,3) \nodd node[above]{$ -1 $};
\filldraw (9,3) \nodd node[above]{$ 1 $};
\filldraw (10,3) \nodd node[above]{$ -2 $};
\end{tikzpicture}.
\end{center}
By Lemma \ref{lambda-w}, this diagram represents an element $ w=(-3,-1,4,-5,2)\in W_5 $.
    
\end{example}

\begin{Lem}[{\cite[Lem. 5.2]{BXX}}]
Suppose $\Phi=B_n$ or $C_n$. Let $\lam=w\mu$ for $w\in W^J$. Then $p(\lam^-)=p({}^-{w})$.
    
\end{Lem}
\begin{Lem}[{\cite[Lem. 5.3]{BXX}}]
Suppose $\Phi=D_n$. Let $\lam=w\mu$ for $w\in W^J$. Then $p(\lam^-)=p({}^-{w})$ or $p({}^-{(wt)})$.
    
\end{Lem}

\begin{Pro}[{\cite[Thm. 6.4]{BX}}]\label{a-p}
Let $L(\lambda)$ be a Harish-Chandra  module of $SU(p,q)$ with highest weight $\lambda-\rho$ and $\lambda=(\lam_1,\cdots,\lam_n)\in\mathfrak{h}^*$.
\begin{itemize}
\item [(i)] If $ \lambda_1-\lambda_{p+1}\in\mathbb{Z} $, i.e. $ \lambda $ is an integral weight, then $ P(\lambda) $ is a Young tableau with at most two columns. In this case, we will have $ V(L(\lambda))=\overline{\mathcal{O}}_k $, where $ k $ is the number of entries in the second column of $ P(\lambda)$.
\item [(ii)] If $ \lambda_1-\lambda_{p+1}\notin\mathbb{Z} $, i.e. $ \lambda $ is not an integral weight,
we will have $ V(L(\lambda))=\overline{\mathcal{O}}_r $, where $r=\min(p,q)$.
\end{itemize}
\end{Pro}

\begin{Pro}[{\cite[Thm. 6.2]{BXX}}]\label{BCD}
		Let $L(\lambda)$ be a Harish-Chandra  module of $G$ with highest weight $\lambda-\rho$ and $\lambda=(\lam_1,\cdots,\lam_n)\in\mathfrak{h}^*$. Set  $q=q(\lambda^-)=(q_1,q_2,\cdots,q_{2n})=p(\lambda^-)^t$ when $G$ is of type $B, C$ or $D$. Then $V(L(\lambda))=\overline{\mathcal{O}}_{k(\lambda)}$ with $k(\lambda)$ given as follows.
\begin{enumerate}		
 
\item $G=Sp(n, \mathbb{R})$ with $n \geq 2$. Then
$$
k(\lambda)= \begin{cases}2 q_{2}^{\od}, & \text{ if } \lam_{1} \in \mathbb{Z}, \\ 2 q_{2}^{\ev}+1, & \text{ if } \lam_{1} \in \frac{1}{2}+\mathbb{Z}, \\ n, & \text{ otherwise. }\end{cases}
$$

\item $G=S O^{*}(2 n)$ with $n \geq 4$. Then
$$
k(\lambda)= \begin{cases}q_{2}^{\ev}, & \text{ if } \lam_{1} \in \frac{1}{2} \mathbb{Z}, \\ \left\lfloor\frac{n}{2}\right\rfloor, & \text{ otherwise. }\end{cases}
$$
\item $G=S O(2,2 n-1)$ with $n \geq 3$. Then
$$
k(\lambda)= \begin{cases}0, & \text{ if } \lam_{1}-\lam_{2} \in \mathbb{Z}, \lam_{1}>\lam_{2}, \\ 1, & \text{ if } \lam_{1}-\lam_{2} \in \frac{1}{2}+\mathbb{Z}, \lam_{1}>0, \\ 2, & \text{ otherwise. }\end{cases}
$$
\item $G=S O(2,2 n-2)$ with $n \geq 4$. Then
$$
k(\lambda)= \begin{cases}0, & \text{ if } \lam_{1}-\lam_{2} \in \mathbb{Z}, \lam_{1}>\lam_{2}, \\ 1, & \text{ if } \lam_{1}-\lam_{2} \in \mathbb{Z},-\left|\lam_{n}\right|<\lam_{1} \leq \lam_{2}, \\ 2, & \text{ otherwise. }\end{cases}
$$		
\end{enumerate}			
		
\end{Pro}

\section{The case of type \texorpdfstring{$A_{n-1}$}{}}\label{a-thm}

In this section, we assume $\mathfrak{g}=\mathfrak{sl}(n,\mathbb{C})$.





First we recall the famous  Robinson--Schensted  algorithm. Some more details can be found in \cite{BX}.

For  a totally ordered set $ \Gamma $, we  denote by $ \mathrm{Seq}_n (\Gamma)$ the set of sequences $ x=(x_1,x_2,\dots, x_n) $   of length $ n $ with $ x_i\in\Gamma $. In our paper, we usually take $\Gamma$ to be $\mathbb{Z}$ or a coset of $\mathbb{Z}$ in $\mathbb{C}$.
	Then we have a  Young tableau $P(x)$ obtained by applying the following Robinson-Schensted algorithm  to $x\in \mathrm{Seq}_n (\Gamma)$. 
 \begin{definition}
     [Robinson--Schensted  algorithm]
For an element  $ x \in  \mathrm{Seq}_n (\Gamma)$, we write  $x=(x_1,\dots,x_n)$. We associate to $x $ a  Young tableau  $ P(x) $ as follows. Let $ P_0 $ be an empty Young tableau. Assume that we have constructed Young tableau $ P_k $ associated to $ (x_1,\dots,x_k) $, $ 0\leq k<n $. Then $ P_{k+1} $ is obtained by adding $ x_{k+1} $ to $ P_k $ as follows. Firstly we add $ x_{k+1} $ to the first row of $ P_k $ by replacing the leftmost entry $ x_j $ in the first row which is \textit{strictly} bigger than $ x_{k+1} $.  (If there is no such an entry $ x_j $, we just add a box with entry $x_{k+1}  $ to the right side of the first row, and end this process). Then add this $ x_j $ to the next row as the same way of adding $x_{k+1} $ to the first row.  Finally we put $P(x)=P_n$.
 \end{definition}

We use $p(x)=(p_1,\dots, p_k)$ to denote the shape of $P(x)$, where $p_i$ is the number of boxes in the $i$-th row of  $P(x)$.
When $\sum\limits_{1\leq i\leq k} p_i=N$, $p(x)$ will be a partition of $N$ and we still denote this partition by $p(x)=[p_1,\dots, p_k]$.

 For an element  $ w\in\ S_n $, we write  $w=(w_1,...,w_n)$. By using our  RS algorithm, we can associate to $w $ a  Young tableau  $ P(w) $. This is the usual RS algorithm. Some more details can be found in \cite{Sagan}.

\begin{Pro}[ \cite{Ariki2000} or \cite{KL}]
	For $\mathfrak{g}=\mathfrak{sl}(n, \mathbb{C})$ and $ W=S_n$,  two elements $x$ and $ y$ in $S_n$ are in the same KL right cell if and only if $P(x)=P(y)$.
\end{Pro}

  Recall that we use $\mathbf{p}=[p_1,p_2,\dots,p_k]$ to denote a partition of $n$ such that $p_1\geq p_2\geq \dots\geq p_k$ is  a sequence of non-negative integers and $|{\mathbf{p}}|:=p_1+\dots+p_k=n$.  
    A partition $\mathbf{p}$ can be identified with a Young diagram $P$ such that ${p}_i$ is the number of  boxes of the $i$-th row of  the Young diagram $P$.  By abuse of notation, 
    we also use $\mathbf{p}$ to denote the Young diagram corresponding to the partition $\mathbf{p}$.
    As usual ${\bf d}^t=[q_1,q_2,\dots,q_l]$ is the dual partition of $\mathbf{p}$ corresponding to the transpose of the Young diagram $\mathbf{p}$.

From \cite[Thm. 10.1.1]{CM},  we  can identify the set of Young diagrams of total size $n$ with the set of complex nilpotent orbits such that each row corresponds to a Jordan block.  
    Consequently,  the set of Young diagrams is also identified with the set  $\mathrm{Irr}(S_{n})$ via the Springer correspondence. We use $\pi_{\bf p}$ to denote the corresponding irreducible representation of $S_n$ for a given partition $\bf p$.  Note that the trivial orbit ${\bf p}=[1^n]$ (the Young diagram having a single column) corresponds to the sign representation $\mathrm{sgn}$ of  $S_n$. 
 

\begin{Pro}[Steinberg \cite{St}]\label{2.1}
	For any $w,y \in W=S_n$, we have $\mathcal{V}(w)=\mathcal{V}(y)$ if and only if $w \stackrel{R}{\sim} y$.
	
\end{Pro}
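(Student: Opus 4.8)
The plan is to convert the geometric equality $\mathcal{V}(w)=\mathcal{V}(y)$ into the combinatorial condition $P(w)=P(y)$ by attaching a standard Young tableau to each orbital variety and identifying the tableau of $\mathcal{V}(w)$ with the Robinson--Schensted insertion tableau $P(w)$. As already recorded in the excerpt, Joseph's theorem says that every orbital variety has the form $\mathcal{V}(w)=\overline{\mathbf{B}(\mathfrak{n}\cap w\mathfrak{n})}$, so $w\mapsto\mathcal{V}(w)$ is a surjection of $S_n$ onto the set of all orbital varieties. In type $A$ the nilpotent orbits are indexed by partitions $\lambda\vdash n$, and by Spaltenstein's theorem the orbital varieties contained in $\overline{\mathcal{O}_\lambda}\cap\mathfrak{n}$ are in bijection with the standard Young tableaux of shape $\lambda$; hence the total number of orbital varieties equals $\sum_{\lambda\vdash n}f^\lambda$, the number of standard Young tableaux with $n$ boxes. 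On the other side, $\{P(w):w\in S_n\}$ is exactly this same set of tableaux, so the two counts agree.

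The essential input is the geometric realization of the Robinson--Schensted correspondence due to Steinberg. Fix a generic nilpotent $x$ in the orbit $\mathcal{O}$ whose closure contains $\mathcal{V}(w)$ as a component. Spaltenstein's duality identifies the orbital varieties of $\mathcal{O}$ with the irreducible components of the Springer fiber $\mathcal{B}_x=\{\mathfrak{b}'\mid x \text{ lies in the nilradical of } \mathfrak{b}'\}$ sitting in the flag variety $\mathcal{B}$, and hence with the standard Young tableaux of the corresponding shape; Steinberg's theorem asserts that under this identification $\mathcal{V}(w)$ is labeled exactly by $P(w)$. Granting this, $w\mapsto\mathcal{V}(w)$ factors through $w\mapsto P(w)$ and so descends to a map from the standard Young tableaux with $n$ boxes to the orbital varieties. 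This descended map is the surjection above between two finite sets of equal cardinality, hence a bijection. Both implications now follow at once: if $w\stackrel{R}{\sim}y$ then $P(w)=P(y)$, so $\mathcal{V}(w)$ and $\mathcal{V}(y)$ share a label and coincide; conversely, if $\mathcal{V}(w)=\mathcal{V}(y)$, then injectivity forces $P(w)=P(y)$, i.e.\ $w\stackrel{R}{\sim}y$.

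The main obstacle is precisely the middle step: proving that the tableau attached to $\mathcal{V}(w)$ is genuinely $P(w)$, and with the conventions matching the \emph{right} cell rather than $Q(w)$ and the left cell. This is the substance of Steinberg's theorem, where one must reconcile the Bruhat geometry of $\mathcal{B}\times\mathcal{B}$, which records $w$, with the Spaltenstein stratification of the Springer fiber, which records the tableau. I would carry this out through Steinberg's variety $Z=\{(x,\mathfrak{b}_1,\mathfrak{b}_2)\mid x \text{ lies in the nilradicals of both } \mathfrak{b}_1 \text{ and } \mathfrak{b}_2\}$, whose irreducible components are the conormal varieties of the $\mathbf{G}$-orbits $O_w\subseteq\mathcal{B}\times\mathcal{B}$ and whose two projections to the nilpotent cone reproduce the pair $(P(w),Q(w))$; alternatively one tracks how the Jordan type of $x$ jumps along a flag and matches each jump to a Knuth transformation. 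Checking that this bookkeeping is exactly Robinson--Schensted insertion is the one place where genuine work, beyond the formal counting, is required.
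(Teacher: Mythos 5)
The paper gives no proof of this proposition at all---it is quoted directly from Steinberg \cite{St}---and your outline is precisely the argument of that cited reference: Spaltenstein's labelling of the orbital varieties inside $\overline{\mathcal{O}_\lambda}\cap\mathfrak{n}$ by standard Young tableaux of shape $\lambda$, combined with Steinberg's geometric Robinson--Schensted theorem (via the Steinberg variety) identifying the label of $\mathcal{V}(w)$ with $P(w)$, so that $\mathcal{V}(w)=\mathcal{V}(y)$ iff $P(w)=P(y)$ iff $w\stackrel{R}{\sim}y$. Your counting reduction (surjectivity of $w\mapsto\mathcal{V}(w)$ plus equal cardinalities forces the descended map to be a bijection) is sound, and you correctly flag that the one piece of genuine work---matching the tableau to $P(w)$ rather than $Q(w)$, i.e.\ to the right-cell convention $w\stackrel{R}{\sim}y\Leftrightarrow P(w)=P(y)$ used in this paper---is exactly the content of Steinberg's theorem, which the paper likewise defers to \cite{St} rather than reproving.
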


\begin{Pro}[Joseph \cite{Jo84}]\label{2.2}
	For $\mathfrak{g}=\mathfrak{sl}(n, \mathbb{C})$ and $ W=S_n$, we have $\mathcal{V}(w)\subseteq  V(L_w)$ and $\dim\mathcal{V}(w)=\dim V(L_w)=\operatorname{GKdim} (L_w)$.
\end{Pro}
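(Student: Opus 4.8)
The plan is to separate the three assertions. The equality $\dim V(L_w)=\operatorname{GKdim}(L_w)$ is the general Bernstein dimension theorem already recalled in the introduction: for a finitely generated $U(\mathfrak{g})$-module $M$ with a good filtration, the degree of the Hilbert--Samuel polynomial of $\operatorname{gr}M$ equals both $\dim\operatorname{supp}_{S(\mathfrak{g})}(\operatorname{gr}M)=\dim V(M)$ and $\operatorname{GKdim}(M)$, so no further work is needed for this part. The substance is therefore the inclusion $\mathcal{V}(w)\subseteq V(L_w)$ together with the dimension match $\dim\mathcal{V}(w)=\dim V(L_w)$.

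I would first pin down the relevant nilpotent orbit. Let $J=\operatorname{Ann}_{U(\mathfrak{g})}L_w$. By the theory of primitive ideals and the Springer correspondence (Joseph, Barbasch--Vogan), $V(U(\mathfrak{g})/J)=\overline{\mathcal{O}}$, where $\mathcal{O}$ is the nilpotent $\mathbf{G}$-orbit whose Jordan type is recorded (up to transpose, according to conventions) by the shape of the Robinson--Schensted tableau $P(w)$. By Steinberg's description (Proposition \ref{2.1} and \cite{St}) the very same orbit $\mathcal{O}$ is the one for which $\mathcal{V}(w)=\overline{\mathbf{B}(\mathfrak{n}\cap w\mathfrak{n})}$ is a component of $\overline{\mathcal{O}}\cap\mathfrak{n}$; since orbital varieties are the Lagrangian components of $\overline{\mathcal{O}}\cap\mathfrak{n}$ (Spaltenstein, Steinberg), this gives $\dim\mathcal{V}(w)=\tfrac12\dim\mathcal{O}$. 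Next, because $L_w$ is a quotient of a Verma module, whose associated variety is the nilradical $\mathfrak{n}\subseteq\mathfrak{g}^{\ast}\cong\mathfrak{g}$, its associated variety $V(L_w)$ is a $\mathbf{B}$-stable closed cone inside $\mathfrak{n}$; and from $\operatorname{gr}J\subseteq\operatorname{Ann}_{S(\mathfrak{g})}(\operatorname{gr}L_w)$ together with the theorem $V(U(\mathfrak{g})/J)=\overline{\mathbf{G}\cdot V(L_w)}$ (Borho--Brylinski \cite{BoB3}, Gabber) one gets $V(L_w)\subseteq\overline{\mathcal{O}}\cap\mathfrak{n}$ and that $V(L_w)$ meets the open orbit $\mathcal{O}$. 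Equidimensionality of $\overline{\mathcal{O}}\cap\mathfrak{n}$ then forces every top-dimensional component of $V(L_w)$ to be an orbital variety of $\mathcal{O}$, whence $\dim V(L_w)=\tfrac12\dim\mathcal{O}=\dim\mathcal{V}(w)$.

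It remains to show that the particular orbital variety $\mathcal{V}(w)$ is among those components, i.e. $\mathcal{V}(w)\subseteq V(L_w)$; this is the crux. My plan is to pass to the flag variety $X=\mathbf{G}/\mathbf{B}$ via Beilinson--Bernstein localization at the regular integral infinitesimal character $\rho$, under which $L_w$ corresponds to the simple holonomic $\mathcal{D}$-module $\mathcal{L}_w$ whose support is the Schubert variety $\overline{\mathbf{B}w\mathbf{B}/\mathbf{B}}$ (the intersection-cohomology $\mathcal{D}$-module). Its characteristic cycle in $T^{\ast}X=\mathbf{G}\times_{\mathbf{B}}\mathfrak{n}$ is a nonnegative integral combination of conormal bundles $\overline{T^{\ast}_{C_y}X}$ over Schubert cells $C_y$ with $y\leq w$, and the conormal bundle of the open cell $C_w=\mathbf{B}w\mathbf{B}/\mathbf{B}$ occurs with coefficient $1$, since $C_w$ is the smooth open stratum of the support. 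The associated cycle of $L_w$ is recovered from this characteristic cycle by restriction to the fiber $\mathfrak{n}$ over the base point (the associated-cycle/characteristic-cycle comparison of Borho--Brylinski \cite{BoB3}, combined with Vogan's construction \cite{Vo91}), and the open-cell conormal contributes exactly $\overline{\mathbf{B}(\mathfrak{n}\cap w\mathfrak{n})}=\mathcal{V}(w)$. Hence $\mathcal{V}(w)\subseteq V(L_w)$.

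The main obstacle is precisely this last step: the orbit-and-dimension bookkeeping of the second paragraph only shows that $V(L_w)$ is a union of orbital varieties of $\mathcal{O}$ of the correct dimension, but not that it contains the specific component $\mathcal{V}(w)$ singled out by $w$. Matching the distinguished open-cell conormal bundle on the $\mathcal{D}$-module side with the orbital variety $\mathcal{V}(w)$ on the associated-variety side --- equivalently, the exactness of the leading-term comparison between characteristic cycles and associated cycles --- is the technical heart and is where care is required about base-point identifications and the localization twist. An alternative, more algebraic route avoiding $\mathcal{D}$-modules is an induction on $\ell(w)$ via translation functors and wall-crossing, verifying that both $w\mapsto\mathcal{V}(w)$ and $w\mapsto V(L_w)$ transform compatibly under the $\tau$-invariant operations and seeding the induction at the trivial module; I would fall back on this if the localization comparison proved delicate.
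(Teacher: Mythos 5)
First, a structural point: the paper contains no proof of this Proposition at all --- it is imported verbatim from Joseph \cite{Jo84} as a known ingredient, so there is no internal argument to compare yours against. Judged on its own terms, your sketch follows the modern geometric route (Beilinson--Bernstein localization, characteristic cycles, moment map) rather than Joseph's original, largely algebraic one, which proceeds via Gabber's involutivity theorem, the Gelfand--Kirillov halving $\operatorname{GKdim}\bigl(U(\mathfrak{g})/\operatorname{Ann}L\bigr)=2\operatorname{GKdim}(L)$, and stability of $V(L_w)$ under wall-crossing/translation functors --- essentially the ``fallback'' you describe in your final paragraph. The overall skeleton (identify the orbit $\mathcal{O}$ through the primitive ideal and the Robinson--Schensted shape, bound $V(L_w)$ inside $\overline{\mathcal{O}}\cap\mathfrak{n}$, then produce the specific component $\mathcal{V}(w)$ from the multiplicity-one occurrence of the open-cell conormal in the characteristic cycle of the IC module) is a known-valid proof strategy, and you correctly identify the last step, together with the $w$-versus-$w^{-1}$ convention issue, as the delicate point.

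Two concrete soft spots remain. First, your dimension step is circular as written: from $V(L_w)\subseteq\overline{\mathcal{O}}\cap\mathfrak{n}$ and equidimensionality of $\overline{\mathcal{O}}\cap\mathfrak{n}$ you only get the upper bound $\dim V(L_w)\leq\tfrac{1}{2}\dim\mathcal{O}$, and the fact that $V(L_w)$ meets $\mathcal{O}$ does not by itself force a top-dimensional component to be a full orbital variety --- a $\mathbf{B}$-stable closed cone can meet $\mathcal{O}$ in small dimension (e.g.\ the line through a highest root vector meets the minimal orbit, yet has dimension $1$). The lower bound needs a separate input: either Gabber's involutivity (each component of $V(L_w)$ is coisotropic along the generic symplectic leaf, hence of dimension at least $\tfrac{1}{2}\dim\mathcal{O}$) or Joseph's halving theorem quoted above. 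Second, the comparison between the characteristic cycle and the associated variety is via the moment map $\mu\colon T^{\ast}X=\mathbf{G}\times_{\mathbf{B}}\mathfrak{n}\to\mathfrak{g}$, namely $V(L_w)=\mu\bigl(\mathrm{Ch}(\mathcal{L}_w)\bigr)$ (Borho--Brylinski), not ``restriction to the fiber over the base point'': the fiber of $\overline{T^{\ast}_{C_w}X}$ over $e\mathbf{B}$ is in general only contained in $\mu\bigl(\overline{T^{\ast}_{C_w}X}\bigr)=\overline{\mathbf{B}(\mathfrak{n}\cap w\mathfrak{n})}=\mathcal{V}(w)$, and the base-point-fiber statement is not what \cite{BoB3} proves. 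With $\mu$ in place of the fiber restriction, your argument for $\mathcal{V}(w)\subseteq V(L_w)$ goes through as intended.
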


\subsection{The case of $\mathfrak{su}(p,q)$}
In this subsection, we assume $\mathfrak{g}=\mathfrak{sl}(n,\mathbb{C})$ and $\mathfrak{g}_{\mathbb{R}}=\mathfrak{su}(p,q)$ with $p+q=n$. 
 A weight $\lambda=(\lambda_1,\lambda_2,...,\lambda_n)$ is called integral if and only if $\lambda_i-\lambda_{i+1}\in\mathbb{Z}$ for $1\leq i\leq n-1$, and called \textit{$ (p,q) $-dominant} if and only if $ \lambda_i-\lambda_j\in\mathbb{Z}_{> 0} $ for $ 1\leq i<j\leq p $ and $ p+1\leq i<j\leq p+q=n $. From Enright--Howe--Wallach \cite{EHW}, we know that the highest weight $SU(p,q)$-module $L(\lambda)$ is a Harish-Chandra module if and only if $\lambda $ is $ (p,q)$-dominant. Therefore, $L_w$ is a highest weight Harish-Chandra $SU(p,q)$-module if and only if $-w\rho$ is $(p,q)$-dominant. The number of these modules is $|\mathcal{W}|=|S_n/(S_p\times S_q)|=\frac{n!}{p!q!}$.

From  \cite{NOT}, for $G=SU(p,q)$ we know that $$\mathcal{O}_j=\{\left(
   \begin{array}{cc}
     0 & c \\
      0 & 0 \\
       \end{array}
       \right)|c\in M_{p\times q}(\mathbb{C}), \text{rank}(c)=j \}$$ 
       is the $j$-th $K_{\mathbb{C}}$-orbit of $G$.
Also we have $$\overline{\mathcal{O}}_m=\coprod\limits_{j\leq m}\mathcal{O}_j=\{\left(
   \begin{array}{cc}
     0 & c \\
      0 & 0 \\
       \end{array}
       \right)|c\in M_{p\times q}(\mathbb{C}), \text{rank}(c)\leq j \}.$$
Sometime we will write it as $\overline{\mathcal{O}}_m(p,q)$ to emphasize that it comes from $SU(p,q)$.

Suppose $s\geq t$. An element $w\in S_{n}$ is called \textit{$(s,t)$-decreasing} if $w=(x_s,...,x_1,y_{t},...,y_1)$ with $x_s>...>x_1$, $y_{t}>...>y_1$ and $x_1<y_1$,...,$x_t<y_t$.

\begin{Thm}\label{a-cell}
Suppose  $L_w$  is a  highest weight module of $\mathfrak{sl}(n,\mathbb{C})$. Then the followings are equivalent:
 \begin{enumerate}
     \item $L_w$ is a Harish-Chandra $SU(p,q)$-module.
     \item $V(L_w)=\mathcal{V}(w)=\overline{\mathcal{O}}_{k}(p,q)$ for some $0\leq k\leq \min\{p,q\}$.
     \item $  w\stackrel{R}{\sim} w_{p,k}=(n,...,p+k+1,p,...,1,p+k,...,p+1).$
 \end{enumerate}

\end{Thm}

\begin{proof} When $L_w$  is a Harish-Chandra $SU(p,q)$-module, we will have $V(L_w)=\mathcal{V}(w)=\overline{\mathcal{O}}_{k}(p,q)$ by Proposition \ref{a-p}
and Proposition \ref{2.2}. Thus (1) $\Rightarrow$ (2).

From Proposition \ref{HC},   $V(L_w)=\mathcal{V}(w)=\overline{\mathcal{O}}_k(p,q)\subset \mathfrak{p}^+$ will imply that $L_w$ is a highest weight Harish-Chandra $SU(p,q)$-module. Thus (2) $\Rightarrow$ (1).

Suppose (1) holds. Now we  prove that  $w$ is right cell equivalent to
\begin{equation*}
  w_{p,k}=(n,n-1,...,p+k+1,p,p-1,...,1,p+k,p+k-1,...,p+1).
\end{equation*}



{\bf Step 1}. We write $\lambda=-w\rho=(\lam_1,...,\lam_p,\lam_{p+1},...,\lam_n)$, which is $(p,q)$-dominant. Let $k$ be the maximum nonnegative integer  for which there exists a sequence of indices $$1\leq i_1<i_2<...<i_k\leq p<p+1\leq j_k<...<j_1\leq n$$ such that $$\lam_{i_1}-\lam_{j_1}\leq 0, \lam_{i_2}-\lam_{j_2}\leq 0,...,\lam_{i_k}-\lam_{j_k}\leq 0.$$ Then from  \cite[Thm. 5.2]{BX}, we can get a Young tableau $P(\lambda)$ with two columns or one column (for $w=-Id$).
We only consider the nontrivial case. Then from  \cite[Lem. 4.5]{BX},  $P(\lambda)$ and $P(w)$ have the same shape. From the construction of $P(\lambda)$, we know that the number of entries in the second column of $P(\lambda)$ equals to $k$. Suppose the entries in the first column of $P(w)$ are $(s_{1},...,s_{n-k})$ and the entries in the second column of $P(w)$ are $(a_{1},...,a_{k})$ from  top to bottom. Then we get $$w\stackrel{R}{\sim}(s_{n-k},...,s_{1}, a_{k},...,a_{1}):=\pi_w,$$ while $\pi_w$ is $(n-k,k)$-decreasing. By Proposition \ref{a-p} and Proposition \ref{2.2},  we have
\begin{equation*}
  V(L_w)=\overline{\mathcal{O}}_{k}(p,q)=\mathcal{V}(w)=\mathcal{V}(\pi_w).
\end{equation*}

{\bf Step 2}. Now we take a special element
\begin{align*}
  \sigma_{p,k}=(p,...,p-k+1,n,...,p+1,p-k,...,1),
\end{align*}
which  is $(k,n-k)$-decreasing. By using the RS algorithm, we can get
\[
	\tiny{\begin{tikzpicture}[scale=1.2,baseline=-55pt]
			\hobox{0}{0}{p-k+1}
			\hobox{0}{1}{\vdots}
			\hobox{0}{2}{p}
	\end{tikzpicture}}\to
	\tiny{\begin{tikzpicture}[scale=1.2,baseline=-55pt]
			\hobox{0}{0}{p-k+1}
			\hobox{0}{1}{\vdots}
			\hobox{0}{2}{p}
			\hobox{1}{0}{n-k+1}
   \hobox{1}{1}{\vdots}
			\hobox{1}{2}{n}
	\end{tikzpicture}}\to
	\tiny{\begin{tikzpicture}[scale=1.2,baseline=-55pt]
			\hobox{0}{0}{p-k+1}
			\hobox{0}{1}{\vdots}
			\hobox{0}{2}{p}
   \hobox{0}{3}{p+k+1}
    \hobox{0}{4}{\vdots}
     \hobox{0}{5}{n}
			\hobox{1}{0}{p+1}
   \hobox{1}{1}{\vdots}
			\hobox{1}{2}{p+k}
	\end{tikzpicture}}\to
	\tiny{\begin{tikzpicture}[scale=1.2,baseline=-55pt]
			\hobox{0}{0}{1}
			\hobox{0}{1}{\vdots}
			\hobox{0}{2}{p}
   \hobox{0}{3}{p+k+1}
    \hobox{0}{4}{\vdots}
     \hobox{0}{5}{n}
			\hobox{1}{0}{p+1}
   \hobox{1}{1}{\vdots}
			\hobox{1}{2}{p+k}
	\end{tikzpicture}}=P(\sigma_{p,k}).
	\]

Note that if we write $-\rho=(b_1,...b_n)$, then
$$-\sigma_{p,k}\rho=(b_n,b_{n-1},...,b_{n-p+k+1},b_{k},b_{k-1},...b_1,b_{n-p+k},...,b_{2k},...b_{k+1})$$ is $(p,q)$-dominant. So $\sigma_{p,k}\in \mathcal{W}$.  Applying the process in {\bf Step 1} to $\sigma_{p,k}$, we can get that 
$\sigma_{p,k}$ is right cell equivalent to $\pi_{\sigma_{p,k}}=w_{p,k}$. From  \cite[Thm. 6.4]{BX} and Proposition \ref{2.2}, we have
\begin{equation*}
  V(L_{\sigma_{p,k}})=\overline{\mathcal{O}}_{k}(p,q)=\mathcal{V}(\sigma_{p,k})=\mathcal{V}(w_{p,k}).
\end{equation*}
Thus $\mathcal{V}(\pi_w)=\mathcal{V}(w_{p,k})$. By Proposition \ref{2.1}, we have $w\stackrel{R}{\sim} \pi_w \stackrel{R}{\sim} w_{p,k}$. Thus (1) $\Rightarrow$ (3).

Now we   suppose $w\stackrel{R}{\sim} w_{p,k}$ for some $0\leq k\leq \min\{p,q\}$. By a direct calculation, we have 
\begin{align*}-w_{p,k}\rho=&
  \frac{1}{2}(n-2k-1,n-2k-3,\cdots,n-2k-2p+1,\\
  &n-1,n-3,\cdots,n-2k+1,n-2k-2p-1,\cdots,1-n).
\end{align*}
It is easy to check that $-w_{p,k}\rho$ is $(p,q)$-dominant and \begin{equation*}
V(L_{w_{p,k}})=\overline{\mathcal{O}}_{k}(p,q)=\mathcal{V}(w_{p,k}).
\end{equation*} 
Thus (3) $\Rightarrow$ (1) and (2).

Now we complete the proof.       

\end{proof}

From Theorem \ref{a-cell}, we can see that  there is only one Harish-Chandra cell (of infinitesimal character $\rho$) with associated variety $\overline{\mathcal{O}}_{k}(p,q)$. This is compatible with Theorem \ref{hermitian av}.


\begin{Cor}\label{cor2}
	
When $G=SU(p,q)$, let $L(\lambda)$  be an integral highest weight Harish-Chandra module. 
 Then we have $V(L(\lambda))=V(L_{w})=\overline{\mathcal{O}}_{k}(p,q)$ if and only if
	\begin{equation*}
	w_{\lambda}\stackrel{R}{\sim}w\stackrel{R}{\sim} w_{p,k}=(n,...,p+k+1,p,...,1,p+k,...,p+1).
	\end{equation*}

\end{Cor}

Note that $w_{p,k}$ is reduced to $(p,p-1,\cdots,1,n,n-1,\cdots,p+1)$ in Corollary \ref{cor2} if $p+k+1>n$ (equivalently $p\geq q=k$). 

\subsection{ The size of highest weight Harish-Chandra cells of type $A_{n-1}$}

Recall the concept of Harish-Chandra cells in Section 2. In the case of type $C$, Barchini--Zierau \cite{BZ} had computed the size of each Harish-Chandra cell consisting of highest weight Harish-Chandra modules with a given associated variety. In the case of type $A$, we will give a similar  result. To prove our result, we recall the famous hook formula. Some details can be found in James \cite{Ja78}

\begin{Pro}[The hook formula]\label{hook}
Let $P$ be a standard Young tableau and $\mathscr{C}$ be the corresponding Kazhdan--Lusztig right cell in the symmetric group $S_n$. Then 
$$\#\mathscr{C}=\frac{n!}{h_1 h_2...h_n},$$
where $h_i$ are the "hooks" of the Young tableau of shape $P$.
\end{Pro}

\begin{Thm}\label{size-a}
We use $\mathcal{C}_k$ to denote a Harish-Chandra cell consisting of  highest weight Harish-Chandra $SU(p,q)$-modules $L_w$ with associated variety $\overline{\mathcal{O}}_{k}(p,q)$, then for $2\leq k\leq\min\{p,q\}$, we have
\begin{align*}
\#\mathcal{C}_k&=\dim\pi_{[2^k,1^{n-k}]}=\text{the number of standard Young tableaux of shape~} P(w)\\
&=\frac{n(n-1)...(n-k+2)(n-2k+1)}{k!}\\
&=\binom{n}{k}-\binom{n}{k-1}.\end{align*}
And $\#\mathcal{C}_0=1$, $\#\mathcal{C}_1=n-1$.
\end{Thm}
\begin{proof}From the proof of Theorem \ref{a-cell}, we know that the number of modules in the Harish-Chandra cell $\mathcal{C}_k$ equals the number of elements in the right cell $\mathscr{C}_k$ which contains $w_{p,k}$. On the other hand, $\#\mathscr{C}_k$ equals the number of the standard Young tableau of shape $p(w_{p,k})=[2^k,1^{n-k}]$, which is just the dimension of $\pi_{p(w_{p,k})}$. We know the numbers of entries in the two columns of $P(w_{p,k})$ are $c_1(P(w_{p,k}))=n-k$ and $c_2(P(w_{p,k}))=k$.  From James \cite{Ja78}, the corresponding  hooks of $P(w_{p,k})$ is a Young tableau $P_k$ consisting of two columns. The entries in the first column of $P_k$ are $(n-(k-1),n-k,...,n-2k+2,n-2k,...,1)$ and the entries in the second column of $P_k$ are $(k,k-1,...,1)$ from  top to bottom. From Proposition \ref{hook}, we have
\begin{align*}\#\mathscr{C}_k&=\frac{n!}{(n-(k-1))\times...\times(n-2k+2)\times (n-2k)\times...\times1\times k\times...\times 1}\\
&=\frac{n(n-1)...(n-k+2)(n-2k+1)}{k!}\\
&=\binom{n}{k} -\binom{n}{k-1}.
\end{align*}

When $k=1$, we will have $c_1(P(w_{p,k}))=n-1$ and $c_2(P(w_{p,k}))=1$, so $$\#\mathcal{C}_1=\frac{n!}{n\times(n-2)\times...\times1\times1}=n-1.$$

When $k=0$, we will have $c_1(P(w_{p,k}))=n$ and $c_2(P(w_{p,k}))=0$, so $\#\mathcal{C}_0=1$.

\end{proof}

If we set $\binom{n}{0}=1$ and $\binom{n}{-1}=0$, our formula in Theorem \ref{size-a} will be uniform.

\section{The case of type \texorpdfstring{$D_n$}{}}
In this section, we assume $\mathfrak{g}=\mathfrak{so}(2n,\mathbb{C})$ and $\mathfrak{g}_{\mathbb{R}}=\mathfrak{so}^*(2n)$ or $\mathfrak{so}(2,2n-2)$. We  use $W'_n$ to denote the Weyl group of $\mathfrak{g}=\mathfrak{so}(2n,\mathbb{C})$.

\subsection{The case of $\mathfrak{so}^*(2n)$}

From Enright--Howe--Wallach \cite{EHW}, we know that the highest weight module $L(\lambda)$ is an integral  Harish-Chandra module if and only if  $\lam_i-\lam_{i+1}\in \mathbb{Z}_{> 0}$ for $1\leq i\leq n-1$ and $2\lam_k\in \mathbb{Z}$ for $1\leq k\leq n$. Therefore, $L_w$ is a highest weight Harish-Chandra $SO^*(2n)$-module if and only if $-w\rho=(\lam_1,\lam_2,\cdots,\lam_n)$ with  $\lam_i-\lam_{i+1}\in \mathbb{Z}_{ >0}$ for $1\leq i\leq n-1$. The number of these modules is $|\mathcal{W}|=|W'_n/S_{n}|=\frac{n!2^{n-1}}{n!}=2^{n-1}$.

First we consider the Wallach representations.

\begin{Thm}\label{d-wlam}
Suppose $\frg_{\mathbb{R}}=\frso^*(2n)$ and $L(\lambda)$ is a highest weight Harish-Chandra module with highest weight $\lambda-\rho =-k c\zeta $. Suppose $w_{\lambda}\in W$ is the minimal length element such that $\mu=w_{\lambda}^{-1}\lambda$ is antidominant, then we have
\begin{enumerate}
    \item if $k\in \mathbb{Z},1\leq k\leq [\frac{n}{2}]$ and $2k\leq n-1 $, then 
$$w_\lambda=\begin{cases}
    (k+1,-(k+2),k, -(k+3),k-1,\dots,&\\
    -2k,2,-(2k+1),1,-(2k+2),\dots, -n), &\text{~if~} n-k \text{~is~ odd},\\
     (k+1,-(k+2),k, -(k+3),k-1,\dots,&\\
     -2k,2,-(2k+1),-1,-(2k+2),\dots, -n), &\text{~if~} n-k \text{~is~ even}.
\end{cases}$$ 
\item if $n$ is even and $k=\frac{n}{2}$, then 
$$w_\lambda=\begin{cases}
    (\frac{n}{2}+1,-(\frac{n}{2}+2),\frac{n}{2},-(\frac{n}{2}+3),\frac{n}{2}-1,\dots,-n,2,1), &\text{~if~} \frac{n}{2}\text{~is~ odd},\\
     (\frac{n}{2}+1,-(\frac{n}{2}+2),\frac{n}{2},-(\frac{n}{2}+3),\frac{n}{2}-1,\dots,-n,2,-1), &\text{~if~} \frac{n}{2} \text{~is~ even}.
\end{cases}$$
\end{enumerate}


\end{Thm}
\begin{proof}
    From \cite{EHW}, we have $\lambda=-k c\zeta +\rho=(n-1-k,n-2-k,\dots, 1-k,-k)$.
By Proposition \ref{BCD}, we have $V(L(\lambda))=\overline{\mathcal{O}}_{k}.$ 

Now we suppose $n-1-k\geq k$. By Proposition  \ref{anti},  we have $\mu=(-(n-1-k),\dots,-(k+1),-k,-k,1-k,1-k,\dots,-1, -1,0)$. Thus we have:
  $$\tiny{\begin{tikzpicture}
     \draw(-1,-4) node{$ \mu^-: $};
      \draw (-1,0) node{$ \lambda^-: $};
    \node (num1) at (0,0) {$n-1-k$};
    \node (num2) at (1,0) {$\dots$};
    \node (num3) at (1.8,0) {$1$};
    \node (num4) at (2.7,0) {$0$};
    \node (num5) at (3.5,0) {$-1$};
    \node (num6) at (4.3,0) {$\dots$};
    \node (num7) at (5.1,0) {$-k$};
    \node (num8) at (6.1,0) {$k$};
    \node (num9) at (7.0,0) {$\dots$};
    \node (num10) at (7.8,0) {$1$};
    \node (num11) at (8.6,0) {$0$};
    \node (num12) at (9.4,0) {$-1$};
    \node (num13) at (10.2,0) {$\dots$};
    \node (num14) at (11,0) {$1+k-n$};

    \node (num1') at (0,-4) {$1+k-n$};
    \node (num2') at (1,-4) {$\dots$};
    \node (num3') at (1.8,-4) {$-k$};
    \node (num4') at (2.7,-4) {$\dots$};
    \node (num5') at (3.5,-4) {$-1$};
    \node (num6') at (4.3,-4) {$-1$};
    \node (num7') at (5.1,-4) {$0$};
    \node (num8') at (6.1,-4) {$0$};
    \node (num9') at (7.0,-4) {$1$};
    \node (num10') at (7.8,-4) {$1$};
    \node (num11') at (8.6,-4) {$\dots$};
    \node (num12') at (9.4,-4) {$k$};
    \node (num13') at (10.2,-4) {$\dots$};
    \node (num14') at (11,-4) {$n-1-k$};
    
    \draw(num1') -- (num14);
    \draw(num3') -- (num7);
   \draw(num5') -- (num5);
    \draw (num6') -- (num12);
   \draw(num7') -- (num4);
   \draw(num8') -- (num11); 
   \draw(num9') -- (num3); 
   \draw(num10') -- (num10); 
   \draw(num12') -- (num8);    
   \draw(num14') -- (num1);

    \draw[dashed](5.6,0.3)--(5.6,-4.3);
\end{tikzpicture}}$$

By Lemma \ref{lambda-w}, we can find that $$w_\lambda=\begin{cases}
    (k+1,-(k+2),k, -(k+3),k-1,\dots,&\\
    -2k,2,-(2k+1),1,-(2k+2),\dots, -n), &\text{~if~} n-k \text{~is~ odd},\\
     (k+1,-(k+2),k, -(k+3),k-1,\dots,&\\
     -2k,2,-(2k+1),-1,-(2k+2),\dots, -n), &\text{~if~} n-k \text{~is~ even}.
\end{cases}$$  
 By using the RS algorithm, when $ n-k$ is odd,  we will have
 \[P(^{-}w_{\lambda})=\tiny{\begin{tikzpicture}[scale=0.8,baseline=-80pt]
			\hobox{0}{0}{-n}
			\hobox{0}{1}{-n+1}
			\hobox{0}{2}{\vdots}
                \hobox{0}{3}{-1}
               \hobox{0}{4}{2k+2}
                \hobox{0}{5}{\vdots}
                \hobox{0}{6}{n}
			\hobox{1}{0}{1}
			\hobox{1}{1}{\vdots}
                 \hobox{1}{2}{2k+1}
   \end{tikzpicture}} .\]

By using the RS algorithm, when $ n-k$ is even,  we will have
\[P(^{-}w_{\lambda})=\tiny{\begin{tikzpicture}[scale=0.8,baseline=-90pt]
			\hobox{0}{0}{-n}
			\hobox{0}{1}{-n+1}
			\hobox{0}{2}{\vdots}
                \hobox{0}{3}{-2}
                \hobox{0}{4}{1}
               \hobox{0}{5}{2k+2}
                \hobox{0}{6}{\vdots}
                \hobox{0}{7}{n}
			\hobox{1}{0}{-1}
   \hobox{1}{1}{2}
			\hobox{1}{2}{\vdots}
                 \hobox{1}{3}{2k+1}
   \end{tikzpicture}} .\]

If $n-1-k<k\leq [\frac{n}{2}]$,   $n$ will be even and $k=\frac{n}{2}$. Then we have $\lambda=-k c\zeta +\rho=(\frac{n}{2}-1,\frac{n}{2}-2,\dots, 1,0,-1,\dots ,-\frac{n}{2}) $. By Proposition  \ref{anti},  we have $\mu=(-\frac{n}{2},-(\frac{n}{2}-1),-(\frac{n}{2}-1),\dots,-1, -1,0)$. Thus we have:
 $$\tiny{\begin{tikzpicture}
     \draw(0,-4) node{$ \mu^-: $};
      \draw (0,0) node{$ \lambda^-: $};
    \node (num1) at (0.8,0) {$\frac{n}{2}-1$};
   \node (num2) at (1.6,0) {$\dots$};
    \node (num3) at (2.4,0) {$1$};
    \node (num4) at (3.2,0) {$0$};
    \node (num5) at (4,0) {$-1$};
    \node (num6) at (4.8,0) {$\dots$};
    \node (num7) at (5.6,0) {$1-\frac{n}{2}$};
    \node (num8) at (6.4,0) {$-\frac{n}{2}$};
    \node (num9) at (7.4,0) {$\frac{n}{2}$};
    \node (num10) at (8.2,0) {$\frac{n}{2}-1$};
    \node (num11) at (9,0) {$\dots$};
    \node (num12) at (9.8,0) {$1$};
    \node (num13) at (10.6,0) {$0$};
    \node (num14) at (11.4,0) {$-1$};
    \node (num15) at (12.2,0) {$\dots$};
    \node (num16) at (13,0) {$1-\frac{n}{2}$};

    \node (num1') at (0.8,-4) {$-\frac{n}{2}$};
    \node (num2') at (1.6,-4) {$1-\frac{n}{2}$};
    \node (num3') at (2.4,-4) {$1-\frac{n}{2}$};
    \node (num4') at (3.2,-4) {$\dots$};
    \node (num5') at (4,-4) {$\dots$};
    \node (num6') at (4.8,-4) {$-1$};
    \node (num7') at (5.6,-4) {$-1$};
    \node (num8') at (6.4,-4) {$0$};
    \node (num9') at (7.4,-4) {$0$};
    \node (num10') at (8.2,-4) {$1$};
    \node (num11') at (9,-4) {$1$};
    \node (num12') at (9.8,-4) {$\dots$};
    \node (num13') at (10.6,-4) {$\dots$};
    \node (num14') at (11.4,-4) {$\frac{n}{2}$-1};
    \node (num15') at (12.2,-4) {$\frac{n}{2}$-1};
    \node (num16') at (13,-4) {$\frac{n}{2}$};

    \draw(num1') -- (num8);
    \draw(num2') -- (num7);
   \draw(num3') -- (num16);
    \draw (num6') -- (num5);
   \draw(num7') -- (num14);
   \draw(num8') -- (num4); 
   \draw(num9') -- (num13); 
   \draw(num10') -- (num3); 
   \draw(num11') -- (num12);    
   \draw(num14') -- (num1);
   \draw(num15') -- (num10);
   \draw(num16') -- (num9);

    \draw[dashed](6.9,0.3)--(6.9,-4.3);
\end{tikzpicture}}$$

By Lemma  \ref{lambda-w}, we can find that 
$$w_\lambda=\begin{cases}
    (\frac{n}{2}+1,-(\frac{n}{2}+2),\frac{n}{2},-(\frac{n}{2}+3),\frac{n}{2}-1,\dots,-n,2,1), &\text{~if~} \frac{n}{2}\text{~is~ odd},\\
     (\frac{n}{2}+1,-(\frac{n}{2}+2),\frac{n}{2},-(\frac{n}{2}+3),\frac{n}{2}-1,\dots,-n,2,-1), &\text{~if~} \frac{n}{2} \text{~is~ even}.
\end{cases}$$

By using the RS algorithm, when $\frac{n}{2}$ is odd, we will have
\[P(^{-}w_{\lambda})=\tiny{\begin{tikzpicture}[scale=0.8,baseline=-35pt]
			\hobox{0}{0}{-n}
			\hobox{0}{1}{\vdots}
                \hobox{0}{2}{-1}
			\hobox{1}{0}{1}
			\hobox{1}{1}{\vdots}
                 \hobox{1}{2}{n}
   \end{tikzpicture}} .\]

By using the RS algorithm, when $\frac{n}{2}$ is even, we will have
\[P(^{-}w_{\lambda})=\tiny{\begin{tikzpicture}[scale=0.8,baseline=-45pt]
			\hobox{0}{0}{-n}
			\hobox{0}{1}{-n+1}
			\hobox{0}{2}{\vdots}
                \hobox{0}{3}{-1}
			\hobox{1}{0}{-1}
   \hobox{1}{1}{2}
			\hobox{1}{2}{\vdots}
                 \hobox{1}{3}{n}
   \end{tikzpicture}} .\]

\end{proof}

Note that when $n$ is even and $k=\frac{n}{2}$ is odd, we may regard that $w_\lambda=(k+1,-(k+2),k, -(k+3),k-1,\dots ,-2k,2,-(2k+1),1,-(2k+2),\dots, -n)$ is reduced to $(\frac{n}{2}+1,-(\frac{n}{2}+2),\frac{n}{2},-(\frac{n}{2}+3),\frac{n}{2}-1,\dots,-n,2,1)$. When $n$ is even and $k=\frac{n}{2}$ is even, the reduction is similar.

By using Theorem \ref{hermitian av}, we have the following result.

\begin{Cor}\label{d-cell-so*}
  When $G=SO^*(2n)$, let $L(\lambda)$  be an integral highest weight Harish-Chandra module. 
 Then we have $V(L(\lambda))=V(L_{w})=\overline{\mathcal{O}}_{k}$ for some $k\in \mathbb{Z}$ and $1\leq k\leq [\frac{n}{2}]$ if and only if
	 $$w_\lambda\stackrel{R}{\sim}w\stackrel{R}{\sim}\begin{cases}
    (k+1,-(k+2),k, -(k+3),k-1,\dots,&\\
    -2k,2,-(2k+1),1,-(2k+2),\dots, -n), &\text{~if~} n-k \text{~is~ odd},\\
     (k+1,-(k+2),k, -(k+3),k-1,\dots,&\\
     -2k,2,-(2k+1),-1,-(2k+2),\dots, -n), &\text{~if~} n-k \text{~is~ even}.
\end{cases}
$$


\end{Cor}

\subsection{The case of $\mathfrak{so}(2,2n-2)$}
From Enright--Howe--Wallach \cite{EHW}, we know that the highest weight module $L(\lambda)$ is an integral  Harish-Chandra module if and only if $\lam_1\pm\lam_2\in \mathbb{Z}$, $\lam_i-\lam_{i+1}\in \mathbb{Z}_{> 0}$ for $2\leq i\leq n-1$ and $\lam_{n-1}-|\lam_n|>0$. Therefore, $L_w$ is a highest weight Harish-Chandra $SO(2,2n-2)$-module if and only if $-w\rho=(\lam_1,\lam_2,\cdots,\lam_n)$ with  $\lam_i-\lam_{i+1}\in \mathbb{Z}_{ >0}$ for $2\leq i\leq n-1$ and  $\lam_{n-1}-|\lam_n|>0$. The number of these modules is $|\mathcal{W}|=|W'_n/W'_{n-1}|=\frac{n!2^{n-1}}{(n-1)!2^{n-2}}=2n$.

First we consider the Wallach representations.

\begin{Thm}\label{wlam-2}
Suppose $\frg_{\mathbb{R}}=\frso(2,2n-2)$ and $L(\lambda)$ is a highest weight Harish-Chandra module with highest weight $\lambda-\rho =-k c\zeta $. Suppose $w_{\lambda}\in W$ is the minimal length element such that $\mu=w_{\lambda}^{-1}\lambda$ is antidominant, then we have
$$w_{\lambda}=
\begin{cases}
(1,-n,-2,\dots ,-(n-1)), & \text{if~} k=1 \text{~and~} n \text{~is~odd}, \\
(-1,-n,-2,\dots ,-(n-1)), & \text{if~} k=1 \text{~and~} n \text{~is~even}, \\
(1,-2, \dots ,-(n-2),n,-(n-1)),& \text{if~} k=2
\text{~and~} n \text{~is~even},\\ 
(-1,-2, \dots ,-(n-2),n,-(n-1)),& \text{if~} k=2
\text{~and~} n \text{~is~odd}.
\end{cases}
$$


\end{Thm}
\begin{proof}
    From \cite{EHW}, we have $\lambda=-k c\zeta +\rho=(-k(n-2)+(n-1),n-2,n-3,\dots, 1,0) $.
By Proposition \ref{BCD}, we have $V(L(\lambda))=\overline{\mathcal{O}}_{k}.$ 

When $k=1$, $\lambda=(1,n-2,n-3,\dots, 1,0) $. By Proposition \ref{anti},  we have 
$\mu=(-(n-2),-(n-1),\dots,-2,-1 -1,0)$.
 Thus we have:
  $$\begin{tikzpicture}
     \draw(-0.5,-4) node{$ \mu^-: $};
      \draw (-0.5,0) node{$ \lambda^-: $};
    \node (num1) at (0.5,0) {1};
    \node (num2) at (1.4,0) {$n-2$};
    \node (num3) at (2.4,0) {$n-1$};
    \node (num4) at (3.2,0) {$\dots$};
    \node (num5) at (4.1,0) {$1$};
    \node (num6) at (5,0) {$0$};
    \node (num7) at (6,0) {$0$};
    \node (num8) at (6.9,0) {$-1$};
    \node (num9) at (7.8,0) {$\dots$};
    \node (num10) at (8.7,0) {$1-n$};
    \node (num11) at (9.7,0) {$2-n$};
    \node (num12) at (10.6,0) {$-1$};
    
    \node (num1') at (0.5,-4) {$2-n$};
    \node (num2') at (1.5,-4) {$1-n$};
    \node (num3') at (2.3,-4) {$\dots$};
    \node (num4') at (3.2,-4) {$-1$};
    \node (num5') at (4.1,-4) {$-1$};
    \node (num6') at (5,-4) {$0$};
    \node (num7') at (6,-4) {$0$};
    \node (num8') at (6.9,-4) {$1$};
    \node (num9') at (7.8,-4) {$1$};
    \node (num10') at (8.7,-4) {$\dots$};
    \node (num11') at (9.6,-4) {$n-1$};
    \node (num12') at (10.6,-4) {$n-2$};
    
    \draw(num1') -- (num11);
    \draw(num2') -- (num10);
    \draw(num4') -- (num8);
    \draw (num5') -- (num12);
   \draw(num6') -- (num6);
    \draw(num7') -- (num7);    
    \draw(num8') -- (num1);    
    \draw(num9') -- (num5); 
    \draw(num11') -- (num3);
    \draw(num12') -- (num2);
    
    \draw[dashed](5.5,0.3)--(5.5,-4.3);
\end{tikzpicture}$$
By Lemma  \ref{lambda-w}, we can find that $$w_\lambda =\begin{cases}
(1,-n,-2,\dots,-(n-1)), & \text{if~}  n \text{~is~odd}, \\
(-1,-n,-2,\dots,-(n-1)), & \text{if~}  n \text{~is~even}.
\end{cases}$$

When $k=2$, $\lambda=(3-n,n-2,n-3,\dots,2 ,1,0)$. By Proposition \ref{anti},  we have 
$\mu=(-(n-2),-(n-3), -(n-3),-(n-4),\dots, -2,-1,0)$.
 Thus we have:
  $$\tiny{\begin{tikzpicture}
     \draw(-1,-4) node{$ \mu^-: $};
      \draw (-1,0) node{$ \lambda^-: $};
    \node (num1) at (0,0) {$3-n$};
    \node (num2) at (1,0) {$n-2$};
    \node (num3) at (2,0) {$n-3$};
    \node (num4) at (2.8,0) {$\dots$};
    \node (num5) at (3.6,0) {$2$};
    \node (num6) at (4.4,0) {$1$};
    \node (num7) at (5.2,0) {$0$};
    \node (num8) at (6.2,0) {$0$};
    \node (num9) at (7,0) {$-1$};
    \node (num10) at (7.8,0) {$-2$};
    \node (num11) at (8.6,0) {$\dots$};
    \node (num12) at (9.4,0) {$3-n$};
    \node (num13) at (10.4,0) {$2-n$};
    \node (num14) at (11.4,0) {$n-3$};

    \node (num1') at (0,-4) {$2-n$};
    \node (num2') at (1,-4) {$3-n$};
    \node (num3') at (2,-4) {$3-n$};
    \node (num4') at (2.8,-4) {$\dots$};
    \node (num5') at (3.6,-4) {$-2$};
    \node (num6') at (4.4,-4) {$-1$};
    \node (num7') at (5.2,-4) {$0$};
    \node (num8') at (6.2,-4) {$0$};
    \node (num9') at (7,-4) {$1$};
    \node (num10') at (7.8,-4) {$2$};
    \node (num11') at (8.6,-4) {$\dots$};
    \node (num12') at (9.4,-4) {$n-3$};
    \node (num13') at (10.4,-4) {$n-3$};
    \node (num14') at (11.4,-4) {$n-2$};
    
    \draw(num1') -- (num13);
    \draw(num2') -- (num1);
    \draw(num3') -- (num12);
    \draw (num5') -- (num10);
   \draw(num6') -- (num9);
    \draw(num7') -- (num7);    
    \draw(num8') -- (num8);    
    \draw(num9') -- (num6);
    \draw(num10') -- (num5);
    \draw(num12') -- (num3);
    \draw(num13') -- (num14);
    \draw(num14') -- (num2);

    \draw[dashed](5.7,0)--(5.7,-4);
\end{tikzpicture}}$$  
By Lemma  \ref{lambda-w}, we can find that $$w_\lambda =\begin{cases}
(1,-2, \dots ,-(n-2),n,-(n-1)),& \text{if~}  n \text{~is~even},\\ 
(-1,-2, \dots ,-(n-2),n,-(n-1)),& \text{if~}  n \text{~is~odd}.
\end{cases}$$

\end{proof}

By using Theorem \ref{hermitian av}, we have the following result.

\begin{Cor}\label{d-cell}
  When $G=SO(2,2n-2)$, let $L(\lambda)$  be an integral highest weight Harish-Chandra module. 
 Then we have $V(L(\lambda))=V(L_{w})=\overline{\mathcal{O}}_{1}$  if and only if
	 $$w_\lambda\stackrel{R}{\sim}w\stackrel{R}{\sim}\begin{cases}
(1,-n,-2,\dots,-(n-1)), & \text{if~}  n \text{~is~odd}, \\
(-1,-n,-2,\dots,-(n-1)), & \text{if~}  n \text{~is~even},
\end{cases}  
$$
and $V(L(\lambda))=V(L_{w})=\overline{\mathcal{O}}_{2}$  if and only if
	 $$w_\lambda\stackrel{R}{\sim}w\stackrel{R}{\sim}\begin{cases}
(1,-2, \dots ,-(n-2),n,-(n-1)),& \text{if~}  n \text{~is~even},\\ 
(-1,-2, \dots ,-(n-2),n,-(n-1)),& \text{if~}  n \text{~is~odd}.
\end{cases}
$$

\end{Cor}

\subsection{ The size of highest weight Harish-Chandra cells of type $D_n$}
First we recall Lusztig's $D$-symbols \cite{lusztig1977symbol}. Let
	\[ \begin{pmatrix}
	\lambda_1~\lambda_2~\cdots~\lambda_{m}\\\mu_1~\mu_2~\cdots~\mu_m
	\end{pmatrix}, m\geq 0\]
	be a tableau of nonnegative integers such that entries in each row are  strictly increasing. Define an equivalence relation on the set of all such tableaux via
	\begin{equation*}\label{eq:equiv-d}
	\begin{pmatrix}
	\mu_1~\mu_2~\cdots\mu_m\\
	\lambda_1~\lambda_2~\cdots~\lambda_{m}
	\end{pmatrix} \sim
	\begin{pmatrix}
	\lambda_1~\lambda_2~\cdots~\lambda_{m}\\\mu_1~\mu_2~\cdots\mu_m
	\end{pmatrix}
	\sim
	\begin{pmatrix}
	0~\lambda_1+1~\lambda_2+1~\cdots~\lambda_{m}+1\\0~\mu_1+1~\mu_2+1~\cdots\mu_m+1
	\end{pmatrix} .
	\end{equation*}
	Denote by $ \Sigma_D $ the set of equivalence classes under this relation $ \sim $. Use the same notation $ \Lambda=\begin{pmatrix}
	\lambda_1~\lambda_2~\cdots~\lambda_{m}\\\mu_1~\mu_2~\cdots\mu_m
	\end{pmatrix}  \in\Sigma_D$ to denote its equivalence class,  called a \textit{$ D$-symbol}.

Suppose $\mathfrak{g}$ is of type $D_n$. Let ${\mathcal{O}}_{\bf{d}}$ be a nilpotent orbit of $\mathfrak{g}$ with partition ${\bf d}=[d_1,\dots,d_k]$ of $2n$, then $k$ is even. From \cite[\S 10.1]{CM}, 
 there is a $D$-symbol
	$\Lambda =\begin{pmatrix}
	\lambda_1~\lambda_2~\cdots~\lambda_{m}\\\mu_1~\mu_2~\cdots\mu_m
	\end{pmatrix} $ such that, as multisets
	\begin{equation*}\label{eq:symb}
	\{2\lambda_i+1,2 \mu_j\mid  i\leq  m, j\leq m  \} =\{ d_l+k-l \mid  l\leq  k\},
	\end{equation*}
	where $2m=k$.

Recall that the elements of $\mathrm{Irr}(W'_n)$ are parameterized by the unordered bipartitions $\{ \bf{ d},\bf{f}\}$  with $|{\bf d}|+|{\bf f}|=n$, except that if $n=2m$ is even and $\bf{ d}=\bf{f}$, then the unordered bipartition  $\{ \bf{ d},\bf{d}\}$ corresponds to two representations,  $\pi^I_{\{\bf{d},\bf{d}\}}$ and $\pi^{II}_{\{\bf{d},\bf{d}\}}$.	So there is a one-to-one correspondence between D-symbols and  unordered bipartitions 	except that if $n=2m$ is even and $\bf{ d}=\bf{f}$. It can be described as follows. Let $\Lambda =\begin{pmatrix}
	\lambda_1~\lambda_2~\cdots~\lambda_{m}\\\mu_1~\mu_2~\cdots\mu_m
	\end{pmatrix} \in\Sigma_D $. We subtract $i-1$ from the $i$-th element  of the top row and  bottom row.
	Then we get an unordered bipartition  $\{ \bf{ d},\bf{f}\}$ corresponding to the new top row and bottom row.

\begin{Pro}[{\cite[Thm. 10.1.2 and Thm. 10.1.3]{CM}}]\label{d-repsize}
Suppose $\mathfrak{g}$ is of type $D_n$. Then we have
    $$\dim \pi_{\{{\bf d},{\bf f}\}}=\begin{cases}
       \binom{n}{|\bf{d}|} \dim \pi_{\bf d}\cdot \dim \pi_{\bf f}, & \text{if~} {\bf d}\neq {\bf f}\\
         \frac{1}{2}\binom{n}{|\bf{d}|}(\dim \pi_{\bf d})^2, &\text{if~} {\bf d}= {\bf f}.
    \end{cases}$$

\end{Pro}

\begin{Thm}\label{size-d}
We use $\mathcal{C}_k$ to denote a Harish-Chandra cell consisting of  highest weight Harish-Chandra $SO^*(2n)$-modules $L_w$ with associated variety $\overline{\mathcal{O}}_{k}$ for some $k\in \mathbb{Z}$ and $0\leq k\leq [\frac{n}{2}]$. Then we have
$$
\#\mathcal{C}_k=\begin{cases}\binom{n}{k}, & \text{ if } k\neq \frac{n}{2}, \\ 
\frac{1}{2}\binom{n}{\frac{n}{2}}, & \text{ if } k=\frac{n}{2}.\end{cases}$$
\end{Thm}
\begin{proof}
   From \cite[\S 3]{BZ}, we know that there is an irreducible $W'_n$-module $\pi(\caO_k^\C,1)$  for 
 each complex nilpotent $G$-orbit $\mathcal{O}_k^{\mathbb{C}}$ via the Springer correspondence. Suppose $k>0$. From \cite{BJ}, we know that the partition corresponding to $\mathcal{O}_k^{\mathbb{C}}$ is ${\bf p}=[2^{2k},1^{2n-4k}]$. The corresponding $D$-symbol is $$ \Lambda=\begin{pmatrix}
	0~\cdots~n-2k-1~n-2k+1~\cdots~n-k\\
 1~~~\quad~~~\cdots~~\quad\quad~~~n-2k~n-2k+1~\cdots~n-k
	\end{pmatrix},$$
 and the corresponding bipartition is 
 $$\{ {\bf d},{\bf f}\}=\{[1^k],[1^{n-k}]\}.$$
    
By Proposition \ref{hook}, we have $\dim \pi_{\bf d}=\dim \pi_{\bf f}=1$.  Note that when ${\bf d}={\bf f}$, we will have $|{\bf d}|=|{\bf f}|=\frac{n}{2}=k$. 
Thus by Proposition \ref{d-repsize}, we can get 
$$
\#\mathcal{C}_k=\begin{cases}\binom{n}{k}, & \text{ if } k\neq \frac{n}{2}, \\ 
\frac{1}{2}\binom{n}{\frac{n}{2}}, & \text{ if } k=\frac{n}{2}.\end{cases}$$
When $k=0$, we have $\#\mathcal{C}_0=1$ since $\mathcal{C}_0=\{\mathbb{C}\}$. 
\end{proof}

\begin{Thm}\label{size-d2} 
We use $\mathcal{C}_k$ to denote a Harish-Chandra cell consisting of  highest weight Harish-Chandra $SO(2,2n-2)$-modules $L_w$ with associated variety $\overline{\mathcal{O}}_{k}$ for $0\leq k\leq 2$. Then we have
$$
\#\mathcal{C}_2=n-1, \#\mathcal{C}_1=n \text{~and ~}\#\mathcal{C}_0=1.$$
\end{Thm}

\begin{proof}
   The argument is similar to the proof of Theorem \ref{size-d}. From \cite{BJ}, we know that the partition corresponding to $\mathcal{O}_2^{\mathbb{C}}$ is ${\bf p}=[3,1^{2n-3}]$. The corresponding $D$-symbol is $$ \Lambda=\begin{pmatrix}
	0~\cdots~n-3~n-2\\
 1~\quad~\cdots~\quad~n-2~\quad~n
	\end{pmatrix},$$
 and the corresponding bipartition is 
 $$\{ {\bf d},{\bf f}\}=\{\emptyset,[2,1^{n-2}]\}.$$
    
By Proposition \ref{hook}, we have $\dim \pi_{\bf d}=1$ and $\dim \pi_{\bf f}=n-1$. 
Thus by Proposition \ref{d-repsize}, we can get 
$$
\#\mathcal{C}_2=\binom{n}{0}\dim \pi_{\bf d}\cdot \dim \pi_{\bf f}=n-1.$$ 

From \cite{BJ}, we know that the partition corresponding to $\mathcal{O}_1^{\mathbb{C}}$ is ${\bf p}=[2,2,1^{2n-4}]$. The corresponding $D$-symbol is $$ \Lambda=\begin{pmatrix}
	0~\cdots~n-3~n-1\\
 1~~\cdots~n-2~n-1
	\end{pmatrix},$$
 and the corresponding bipartition is 
 $$\{ {\bf d},{\bf f}\}=\{[1],[1^{n-1}]\}.$$
By Proposition \ref{hook}, we have $\dim \pi_{\bf d}=1$ and $\dim \pi_{\bf f}=1$. 
Thus by Proposition \ref{d-repsize}, we can get  $\#\mathcal{C}_1=n$. Finally, we have $\#\mathcal{C}_0=1$.

\end{proof}

\begin{Rem}\label{pycox}
By using PyCox \cite{ge}, we can easily find out all the elements in a given Harish-Chandra cell $\mathcal{C}_k$.
    In PyCox \cite{ge}, the order ``klcellrepelm$(W, w)$" will give us the index $i_w$ of the left cell $\mathcal{C}_L(w)$ containing  $w\in W'_n$. Then the order ``$\mathrm{left}_{-}\mathrm{cell}_{-}\mathrm{infos}[i_w]$" will given us all the elements contained in this left cell $\mathcal{C}_L(w)$. If we choose $w=w_{\lam}^{-1}$ for some $w_{\lam}$ in Theorem \ref{d-wlam} and Theorem \ref{wlam-2}, then by using PyCox, we can find out all the elements $\{x_1,\dots,x_m\}$ contained in the same left cell  $\mathcal{C}_L(w_{\lam}^{-1})$.
    Thus the right cell $\mathcal{C}_R(w^{-1})=\mathcal{C}_R(w_{\lam})$ consists of the following elements:
    $$\{x^{-1}_1,\dots,x^{-1}_m\}.$$
    
\end{Rem}

\section{The cases of type \texorpdfstring{$E_6$}{} and type \texorpdfstring{$E_7$}{}}
In this section, we consider highest weight Harish-Chandra modules of type $E$. We use $W(E_n)$ to denote the Weyl group of $E_n$.

First we recall the method of finding out the minimal length element $w_{\lambda}$ for an integral weight $\lambda$ such that $w_{\lam}^{-1}\lam=\mu$  is antidominant.

 Let $\Delta=\{\alpha_i \mid  1\leq i\leq n\}$ be the simple roots of an exceptional type root system $\Phi$ with corresponding fundamental weights $\{\omega_i\mid 1\leq i\leq n\}$. 
The Cartan matrix is defined by
$A=(A_{ij})_{n\times n}$ with $A_{ij}=(\alpha_i, \alpha_j^\vee)$ and $A_{ij}\in \{0,-1,-2,-3\}$ for $i \neq j$.
 Then  $\alpha_j=\sum\limits_{k=1}^n A_{jk}\omega_k$. For any $\lam\in \mathfrak{h}^*$, we can write $\lam=\sum\limits_{1\leq i\leq n}k_i\omega_i$, where $k_i=(\lam, \alpha_i^{\vee})$. Then an integral weight $\lam$ is  antidominant  if and only if   $k_i\in \mathbb{Z}_{\leq 0}$ for  all $1\leq i\leq n$.

Now we want to find  $w_{\lam}$ for an integral weight $\lambda$. When $\lam$ is antidominant (resp. dominant), $w_{\lam}=\mathrm{Id}$ (resp. $-\mathrm{Id}$). Suppose $\lam$ is not antidominant, then there exist some $k_i\in \mathbb{Z}_{>0}$. We choose the largest index  $i_{\lam}={i_1}$ such that $k_{i_1}\in \mathbb{Z}_{>0}$. Denote the simple reflection $s_{\alpha_i}$ by $s_i$. Then we have $$s_{i_1}\lam=\lam-k_{i_1}\alpha_{i_1}=\lam-k_{i_1}\sum\limits_{j=1}^n A_{i_1j}\omega_j=\lam-2k_{i_1}\omega_{i_1}-k_{i_1}\sum\limits_{j=1,j\neq i_1}^n A_{i_1j}\omega_j.$$
 Then the coefficient of $\omega_{i_1}$ in $s_{i_1}\lam$ becomes $-k_{i_1}$. 

The above process is called the \emph{positive  reduction algorithm}.


\begin{Lem}[{\cite[Lem. 3.1]{do23}}]\label{find-w_lambda}
    For any integral weight $\lam$ (regular or singular),   we can get an antidominant weight $\mu$ after finite steps of  the positive reduction algorithm. We multiply all the $s_i$ appeared in these steps and get a $w_{\lambda}$. Then this $w_{\lambda}$ has the minimal length in $W$ such that $w_{\lambda}^{-1}\lam$ is antidominant.
\end{Lem}

\begin{example}
    Let  $\lam=(-1,1,-1,-1,-1,1,1,-1)$ be  an integral weight of type $E_6$. We can write $\lam=-\omega_1+2\omega_3-2\omega_4:=[-1,0,2,-2,0,0]$. The largest index of $\lam$ with a positive element is $i_1=3$. Recall that $\alpha_3=-\omega_{1}+2\omega_{3}-\omega_{4}=:[-1,0,2,-1,0,0]$. Thus $s_{3}\lam=\lam-2\alpha_3=[1,0,-2,0,0,0]$.  The largest index of $s_3\lam$ with a positive element is $i_2=1$. Recall that  $\alpha_1=2\omega_{1}-\omega_3$. Thus $s_1s_{3}\lam=s_{3}\lam-\alpha_1=[-1,0,-1,0,0,0]$, which is already antidominant.    Thus we have $w_{\lam}=s_3s_{1}$. 
\end{example}

\subsection{The case of  $\mathfrak{e}_{6(-14)}$}
In this subsection, we assume that $\mathfrak{g}$ is of type $E_6$ and $\mathfrak{g}_{\mathbb{R}}=\mathfrak{e}_{6(-14)}$.

 The root system $ \Phi $ can be realized as a subset of $\mathbb{R}^8$.  The simple roots are 
\begin{align*}
\alpha_1&=\frac{1}{2}(\ep_1-\ep_2-\ep_3-\ep_4-\ep_5-\ep_6-\ep_7+\ep_8) ,\\
\alpha_2&=\ep_1+\ep_2,\\
\alpha_k&=\ep_{k-1}-\ep_{k-2} \text{ with }3\leq k\leq 6.
\end{align*}

For type $E_6$, we have $$\Phi^+=
\{\ep_i \pm \ep_j \mid 5\geq i > j\geq 1\}\cup 
\Big\{\frac{1}{2}(\sum_{i=1}^5 (-1)^{n(i)}\ep_i -\ep_6-\ep_7+\ep_8)\mid \sum_{i=1}^5n(i) \text{ even}\Big\}$$
and $ \hs=\{ (\lam_1,\lam_2,\cdots,\lam_8)\in\mathbb{C}^8\mid \lam_6=\lam_7=-\lam_8\} $. 
Then $ \lam=(\lam_1,\lam_2,\cdots,\lam_8) $ is an integral weight if and only if $\lam_1- \lam_2-\cdots-\lam_7+\lam_8\in 2\mathbb{Z}$,  $ \lam_1-\lam_2\in\mathbb{Z}, \lam_2-\lam_3\in\mathbb{Z},\lam_3-\lam_4\in\mathbb{Z},\lam_4-\lam_5\in\mathbb{Z}$ and $2\lam_i\in \mathbb{Z} $ for $1\leq i\leq 5$.

From Enright--Howe--Wallach \cite{EHW}, we know that the highest weight module $L(\lambda)$ is an integral  Harish-Chandra module if and only if  $\lam_5>\lam_{4}>\dots>\lam_2>|\lam_1|$, $2\lam_k\in \mathbb{Z}$ for $1\leq k\leq 5$ and $\lam_i-\lam_{i+1}\in \mathbb{Z}$ for $1\leq i\leq  4$. Therefore, $L_w$ is a highest weight Harish-Chandra module of $E_6$ if and only if $-w\rho=(\lam_1,\lam_2,\cdots,\lam_n)$ with  $\lam_5>\lam_{4}>\dots>\lam_2>|\lam_1|$ and $2\lam_k\in \mathbb{Z}$ for $1\leq k\leq 5$. The number of these modules is $|\mathcal{W}|=|W(E_6)/W'_{5}|=\frac{2^7 3^{4}5}{2^4 5!}=27$.

From \cite{Bour}, the Cartan matrix of $E_6$ is $$\left.\left(\begin{array}{cccccc}2&0&-1&0&0&0\\0&2&0&-1&0&0\\-1&0&2&-1&0&0\\0&-1&-1&2&-1&0\\0&0&0&-1&2&-1\\0&0&0&0&-1&2\end{array}\right.\right).$$

First we consider the  Wallach representations. Denote $s_is_js_k:=[i,j,k]$.

\begin{Thm}\label{e6-lam}
Suppose $\frg_{\mathbb{R}}=\fre_{6(-14)}$ and $L(\lambda)$ is a highest weight Harish-Chandra module with highest weight $\lambda-\rho =-k c\zeta $. Suppose $w_{\lambda}\in W$ is the minimal length element such that $\mu=w_{\lambda}^{-1}\lambda$ is antidominant, then we have
$$w_{\lambda}=
\begin{cases}
[6,5,6,4,5,6,3,4,5,6,2,4,5,6,3,4,
           &\\5,2,4,3,1,3,4,5,6,2,4,5,3,4,2,1,3],
& \text{if~} k=1, \\
[{6},{5},{6},{4},{5}
,{6},{3},{4},{5},{6}
,{2},{4},{5},{6},&\\
 {3},{4},{5},{2},{4},{3}
,{1},{3},{4},{5},{6}
,{2},{4}],
& \text{if~} k=2.
\end{cases}
$$

\end{Thm}
\begin{proof}
    From \cite{EHW}, we have $\lambda=-k c\zeta +\rho=(0,1,2,3,4, 2k-4,2k-4,-2k+4)$.
By Proposition \ref{BCD}, we have $V(L(\lambda))=\overline{\mathcal{O}}_{k}$ for $k=0,1,2$. 

When $k=1$, we have $\lambda=(0,1,2,3,4, -2,-2,2)$. Then we can write $\lambda=-2\omega_{1}+\omega_{2}+\omega_{3}+\omega_{4}+\omega_{5}+\omega_{6}:=[-2,1,1,1,1,1]$. By Lemma \ref{find-w_lambda}, we have the following steps to find $w_{\lambda}$:
         \allowdisplaybreaks    
       \begin{align*}
      \lambda:&=[-2,1,1,1,1,1]\\
      &\xrightarrow{-\alpha_{6}}
        [-2,1,1,1,2,-1]\xrightarrow{-2\alpha_{5}}
        [-2,1,1,3,-2,1]\\&\xrightarrow{-\alpha_{6}}
        [-2,1,1,3,-1,-1]\xrightarrow{-3\alpha_{4}}
        [-2,4,4,-3,2,-1]\\&\xrightarrow{-2\alpha_{5}}
        [-2,4,4,-1,-2,1]\xrightarrow{-\alpha_{6}}
        [-2,4,4,-1,-1,-1]\\&\xrightarrow{-4\alpha_{3}}[2,4,-4,3,-1,-1] \xrightarrow{-3\alpha_{4}}
        [2,7,-1,-3,2,-1]\\&\xrightarrow{-2\alpha_{5}}
        [2,7,-1,-1,-2,1]\xrightarrow{-\alpha_{6}}
        [2,7,-1,-1,-1,-1]\\&\xrightarrow{-7\alpha_{2}}
        [2,-7,-1,6,-1,-1]\xrightarrow{-6\alpha_{4}}
        [2,-1,5,-6,5,-1]\\&\xrightarrow{-5\alpha_{5}}
        [2,-1,5,-1,-5,4]\xrightarrow{-4\alpha_{6}}
        [2,-1,5,-1,-1,-4]\\&\xrightarrow{-5\alpha_{3}}
        [7,-1,-5,4,-1,-4] \xrightarrow{-4\alpha_{4}}
        [7,3,-1,-4,3,-4]\\&\xrightarrow{-3\alpha_{5}}
        [7,3,-1,-1,-3,-1]\xrightarrow{-3\alpha_{2}}
        [7,-3,-1,2,-3,-1]\\&\xrightarrow{-2\alpha_{4}}
        [7,-1,1,-2,-1,-1] \xrightarrow{-\alpha_{3}}
        [8,-1,-1,-1,-1,-1]\\&\xrightarrow{-8\alpha_{1}}
        [-8,-1,7,-1,-1,-1]\xrightarrow{-7\alpha_{3}}
        [-1,-1,-7,6,-1,-1]\\&\xrightarrow{-6\alpha_{4}}
        [-1,5,-1,-6,5,-1]\xrightarrow{-5\alpha_{5}}
        [-1,5,-1,-1,-5,4]\\&\xrightarrow{-4\alpha_{6}}
        [-1,5,-1,-1,-1,-4]\xrightarrow{-5\alpha_{2}}
        [-1,-5,-1,4,-1,-4] \\&\xrightarrow{-4\alpha_{4}}
        [-1,-1,3,-4,3,-4]\xrightarrow{-3\alpha_{5}}
        [-1,-1,3,-1,-3,-1]\\&\xrightarrow{-3\alpha_{3}}
        [2,-1,-3,2,-3,-1] \xrightarrow{-2\alpha_{4}}
        [2,1,-1,-2,-1,-1]\\&\xrightarrow{-\alpha_{2}}
        [2,-1,-1,-1,-1,-1]\xrightarrow{-2\alpha_{1}}
        [-2,-1,1,-1,-1,-1]\\&\xrightarrow{-\alpha_{3}}
        [-1,-1,-1,0,-1,-1]\\
        &:=\mu.     
       \end{align*}
Here $x\xrightarrow{-k_j\alpha_{j}}y$ means that $s_jx=x-k_j\alpha_{j}=y$ since  $x_j=k_j$.
      
        Thus we have 
        \begin{align*}
           w_\lambda=&[6,5,6,4,5,6,3,4,5,6,2,4,5,6,3,4,\\
           &5,2,4,3,1,3,4,5,6,2,4,5,3,4,2,1,3]. 
        \end{align*}

        
When $k=2$, we have $\lambda=(0,1,2,3,4,0,0,0)$. Then we can write $\lambda=-5\omega_{1}+\omega_{2}+\omega_{3}+\omega_{4}+\omega_{5}+\omega_{6}:=[-5,1,1,1,1,1]$. By Lemma \ref{find-w_lambda}, we have the following steps to find $w_{\lambda}$:
        \allowdisplaybreaks     
       \begin{align*}
      \lambda:&=[-5,1,1,1,1,1]\\
      &\xrightarrow{-\alpha_{6}}
        [-5,1,1,1,2,-1]\xrightarrow{-2\alpha_{5}}
        [-5,1,1,3,-2,1]\\
      &\xrightarrow{-\alpha_{6}}
        [-5,1,1,3,-1,-1]\xrightarrow{-3\alpha_{4}}
        [-5,4,4,-3,2,-1]\\
      &\xrightarrow{-2\alpha_{5}}
        [-5,4,4,-1,-2,1]\xrightarrow{-\alpha_{6}}
        [-5,4,4,-1,-1,-1]\\
      &\xrightarrow{-4\alpha_{3}}[-1,4,-4,3,-1,-1]\xrightarrow{-3\alpha_{4}}
        [-1,7,-1,-3,2,-1]\\
      &\xrightarrow{-2\alpha_{5}}
        [-1,7,-1,-1,-2,1]\xrightarrow{-\alpha_{6}}
        [-1,7,-1,-1,-1,-1]\\
      &\xrightarrow{-7\alpha_{2}}
        [-1,-7,-1,6,-1,-1]\xrightarrow{-6\alpha_{4}}
        [-1,-1,5,-6,5,-1]\\
      &\xrightarrow{-5\alpha_{5}}
        [-1,-1,5,-1,-5,4]\xrightarrow{-4\alpha_{6}}
        [-1,-1,5,-1,-1,-4]\\
      &\xrightarrow{-5\alpha_{3}}
        [4,-1,-5,4,-1,-4]\xrightarrow{-4\alpha_{4}}
        [4,3,-1,-4,3,-4]\\
      &\xrightarrow{-3\alpha_{5}}
        [4,3,-1,-1,-3,-1]\xrightarrow{-3\alpha_{2}}
        [4,-3,-1,2,-3,-1]\\
      &\xrightarrow{-2\alpha_{4}}
        [4,-1,1,-2,-1,-1]\xrightarrow{-\alpha_{3}}
        [5,-1,-1,-1,-1,-1]\\
      &\xrightarrow{-5\alpha_{1}}
        [-5,-1,4,-1,-1,-1]\xrightarrow{-4\alpha_{3}}
        [-1,-1,-4,3,-1,-1]\\
      &\xrightarrow{-3\alpha_{4}}
        [-1,2,-1,-3,2,-1]\xrightarrow{-2\alpha_{5}}
        [-1,2,-1,-1,-2,1]\\
      &\xrightarrow{-\alpha_{6}}
        [-1,2,-1,-1,-1,-1]\xrightarrow{-2\alpha_{2}}
        [-1,-2,-1,1,-1,-1] \\
      &\xrightarrow{-\alpha_{4}}
        [-1,-1,0,-1,0,-1]\\
        &:=\mu.     
       \end{align*}
      
        Thus we have 
        \begin{align*}
           w_\lambda=&[{6},{5},{6},{4},{5}
,{6},{3},{4},{5},{6}
,{2},{4},{5},{6},\\
           &{3}
,{4},{5},{2},{4},{3}
,{1},{3},{4},{5},{6}
,{2},{4}]. 
        \end{align*}

\end{proof}

\begin{Cor}\label{e6-cell}
  When $\frg_{\mathbb{R}}=\fre_{6(-14)}$, let $L(\lambda)$  be an integral highest weight Harish-Chandra module. 
 Then we have $V(L(\lambda))=V(L_{w})=\overline{\mathcal{O}}_{1}$  if and only if
	  \begin{align*}
           w_\lambda\stackrel{R}{\sim}w\stackrel{R}{\sim}[6,5,6,4,5,6,3,4,5,6,2,4,5,6,3,4,
        5,2,4,3,1,3,4,5,6,2,4,5,3,4,2,1,3], 
        \end{align*}
and $V(L(\lambda))=V(L_{w})=\overline{\mathcal{O}}_{2}$  if and only if
	  \begin{align*}
           w_\lambda\stackrel{R}w{\sim}\stackrel{R}{\sim}[{6},{5},{6},{4},{5}
,{6},{3},{4},{5},{6}
,{2},{4},{5},{6},
           {3}
,{4},{5},{2},{4},{3}
,{1},{3},{4},{5},{6}
,{2},{4}]. 
        \end{align*}

\end{Cor}

    

\begin{Thm}\label{size-e6}
When $\frg_{\mathbb{R}}=\fre_{6(-14)}$, 
we use $\mathcal{C}_k$ to denote a Harish-Chandra cell consisting of  highest weight Harish-Chandra modules $L_w$ with associated variety $\overline{\mathcal{O}}_{k}$ for $0\leq k\leq 2$. Then we have
$$
\#\mathcal{C}_2=20, \#\mathcal{C}_1=6, \text{~and ~}\#\mathcal{C}_0=1.$$
\end{Thm}

\begin{proof}
    See \cite[Exa. 4.5]{BHXZ}.
\end{proof}
Note that the elements in $\mathcal{W}$ are described in \cite[Tab. 3]{Zie18}.

\subsection{The case of type $\mathfrak{e}_{7(-25)}$}
In this subsection, we assume that $\mathfrak{g}$ is of type $E_7$ and $\mathfrak{g}_{\mathbb{R}}=\mathfrak{e}_{7(-25)}$.

 The root system $ \Phi $ can be realized as a subset of $\mathbb{R}^8$.  The simple roots are 
\begin{align*}
\alpha_1&=\frac{1}{2}(\ep_1-\ep_2-\ep_3-\ep_4-\ep_5-\ep_6-\ep_7+\ep_8) ,\\
\alpha_2&=\ep_1+\ep_2,\\
\alpha_k&=\ep_{k-1}-\ep_{k-2} \text{ with }3\leq k\leq 7.
\end{align*}

For type $E_7$, we have $$\Phi^+=
\{\ep_i \pm \ep_j \mid 6\geq i > j\geq 1\}\cup 
\Big\{\frac{1}{2}(\sum_{i=1}^6 (-1)^{n(i)}\ep_i -\ep_7+\ep_8)\mid \sum_{i=1}^6 n(i) \text{ odd}\Big\}$$
and $ \hs=\{ (\lam_1,\lam_2,\cdots,\lam_8)\in\mathbb{C}^8\mid \lam_7=-\lam_8\} $. 
Then $ \lam=(\lam_1,\lam_2,\cdots,\lam_8) $ is an integral weight if and only if $\lam_1- \lam_2-\cdots-\lam_7+\lam_8\in 2\mathbb{Z}$,  $ \lam_i-\lam_{i+1}\in\mathbb{Z}$ and $2\lam_k\in \mathbb{Z} $ for $1\leq i\leq 5$ and $1\leq k\leq 6$.

From Enright--Howe--Wallach \cite{EHW}, we know that the highest weight module $L(\lambda)$ is an integral  Harish-Chandra module if and only if  $\lam_5>\lam_{4}>\dots>\lam_2>|\lam_1|$, $2\lam_k\in \mathbb{Z}$ for $1\leq k\leq 5$ and $\lam_i-\lam_{i+1}\in \mathbb{Z}$ for $1\leq i\leq  4$. Therefore, $L_w$ is a highest weight Harish-Chandra module of $E_6$ if and only if $-w\rho=(\lam_1,\lam_2,\cdots,\lam_n)$ with  $\lam_5>\lam_{4}>\dots>\lam_2>|\lam_1|$, $2\lam_k\in \mathbb{Z}$ for $1\leq k\leq 5$. The number of these modules is $|\mathcal{W}|=|W(E_7)/W(E_6)|=\frac{2^{10} 3^{4}5\cdot 7}{2^7 3^{4}5}=56$.

From \cite{Bour}, the Cartan matrix of $E_7$ is
$$\begin{pmatrix}2&0&-1&0&0&0&0\\0&2&0&-1&0&0&0\\-1&0&2&-1&0&0&0\\0&-1&-1&2&-1&0&0\\0&0&0&-1&2&-1&0\\0&0&0&0&-1&2&-1\\0&0&0&0&0&-1&2\end{pmatrix}.$$

\begin{Thm}\label{e7-lam}
Suppose $\frg_{\mathbb{R}}=\fre_{7(-25)}$ and $L(\lambda)$ is a highest weight Harish-Chandra module with highest weight $\lambda-\rho =-k c\zeta $. Suppose $w_{\lambda}\in W$ is the minimal length element such that $\mu=w_{\lambda}^{-1}\lambda$ is antidominant, then we have
$$w_{\lambda}=
\begin{cases}
[{6},{5},{6},{4},{5} ,{6},{3},{4},{5},{6}
,{7},{2},{4},{5},{6}  ,{7},{3},{4},{5},{6}
,{2},{4},{5},{3},{4}  ,{2},{1},{3},{4},{5},\\
{6},{7},{2},{4},{5}  ,{6},{3},{4},{5},{2}
,{4},{3},{1},{3},{4}  ,{5},{6},{7},{2},{4}
,{5},{6},{3},{4},{5} ,{2},{4},{3},{1}],

& \text{if~} k={1}, \\
[{6},{5},{6},{4},{5}  ,{6},{3},{4},{5},{6}
,{2},{4},{5},{6},{7}  ,{3},{4},{5},{2},{4}
 ,{3},{1},{3},{4},{5},{6},\\{7},{2},{4},{5},
{6},{3},{4},{5},{2}  ,{4},{3},{1},{3},{4}
,{5},{6},{7},{2},{4}  ,{5},{6},{3},{4},{5}
,{2}],


& \text{if~} k={2},\\
[{6},{5},{6},{4},{5}  ,{6},{3},{4},{5},{6}
,{2},{4},{5},{6},{3}  ,{4},{5},{2},{4},{3}
,{1},\\{3},{4},{5},{6}  ,{7},{2},{4},{5},{6},
{3},{4},{5},{2},{4}  ,{1},{3},{4},{5},{6}
,{7},{2}],
& \text{if~} k={3}.
\end{cases}
$$

\end{Thm}
\begin{proof}
    From \cite{EHW}, we have $\lambda=-k c\zeta +\rho=(0,1,2,3,4,5-4k,2k-\frac{17}{2},\frac{17}{2}-2k)$.
By Proposition \ref{BCD}, we have $V(L(\lambda))=\overline{\mathcal{O}}_{k}$ for $k=0,1,2,3$. 

When $k=1$, we have $\lambda=(0,1,2,3,4, 1,-\frac{13}{2},\frac{13}{2})$. Then we can write $\lambda=\omega_{1}+\omega_{2}+\omega_{3}+\omega_{4}+\omega_{5}+\omega_{6}-\omega_{7}:=[1,1,1,1,1,1,-3]$. By Lemma \ref{find-w_lambda}, we have the following steps to find $w_{\lambda}$:
   \allowdisplaybreaks               
      \begin{align*}        
        \lambda:&=[1,1,1,1,1,1,-3]\\
       & \xrightarrow{-\alpha_{6}}
        [1,	1,	1,	1,	2,	-1,	-2]\xrightarrow{-2\alpha_{5}}
        [1,	1,	1,	3,	-2,	1,	-2]\\
        &\xrightarrow{-\alpha_{6}}
        [1,	1,	1,	3,	-1,	-1,	-1]\xrightarrow{-3\alpha_{4}}
        [1,	4,	4,	-3,	2,	-1,	-1]\\
        &\xrightarrow{-2\alpha_{5}}
        [1,	4,	4,	-1,	-2,	1,	-1]\xrightarrow{-\alpha_{6}}
        [1,	4,	4,	-1,	-1,	-1,	0]\\
        &\xrightarrow{-4\alpha_{3}}
        [5,	4,	-4,	3,	-1,	-1,	0]\xrightarrow{-3\alpha_{4}}
        [5,	7,	-1,	-3,	2,	-1,	0]\\
        &\xrightarrow{-2\alpha_{5}}
        [5,	7,	-1,	-1,	-2,	1,	0]\xrightarrow{-\alpha_{6}}
        [5,	7,	-1,	-1,	-1,	-1,	1]\\
        &\xrightarrow{-\alpha_{7}}
        [5,	7,	-1,	-1,	-1,	0,	-1]\xrightarrow{-7\alpha_{2}}
        [5,	-7,	-1,	6,	-1,	0,	-1]\\
        &\xrightarrow{-6\alpha_{4}}
        [5,	-1,	5,	-6,	5,	0,	-1]\xrightarrow{-5\alpha_{5}}
        [5,	-1,	5,	-1,	-5,	5,	-1]\\
        &\xrightarrow{-5\alpha_{6}}
        [5,	-1,	5,	-1,	0,	-5,	4]\xrightarrow{-4\alpha_{7}}
        [5,	-1,	5,	-1,	0,	-1,	-4]\\
        &\xrightarrow{-5\alpha_{3}}
        [10,-1,	-5,	4,	0,	-1,	-4]\xrightarrow{-4\alpha_{4}}
        [10,3,	-1,	-4,	4,	-1,	-4]\\
        &\xrightarrow{-4\alpha_{5}}
        [10,3,	-1,	0,	-4,	3,	-4]\xrightarrow{-3\alpha_{6}}
        [10,3,	-1,	0,	-1,	-3,	-1]\\
        &\xrightarrow{-3\alpha_{2}}
        [10,-3,-1,3,	-1,	-3,	-1]\xrightarrow{-3\alpha_{4}}
        [10,0,	2,	-3,	2,	-3,	-1]\\
        &\xrightarrow{-2\alpha_{5}}
        [10,0,	2,	-1,	-2,	-1,	-1]\xrightarrow{-2\alpha_{3}}
        [12,0,	-2,	1,	-2,	-1,	-1]\\
        &\xrightarrow{-\alpha_{4}}
        [12,1,	-1,	-1,	-1,	-1,	-1]\xrightarrow{-\alpha_{2}}
        [12,-1,	-1,	0,	-1,	-1,	-1]\\
        &\xrightarrow{-12\alpha_{1}}
        [-12,-1, 11,0,	-1,	-1,	-1]\xrightarrow{-11\alpha_{3}}
        [-1,-1,	-11,11,	-1,	-1,	-1]\\
        &\xrightarrow{-11\alpha_{4}}
        [-1,10,0,-11,10,-1,	-1]\xrightarrow{-10\alpha_{5}}
        [-1,10,0,-1,-10,9,	-1]\\
        &\xrightarrow{-9\alpha_{6}}
        [-1,10,0,-1,-1,-9,8]\xrightarrow{-8\alpha_{7}}
        [-1,10,0,-1,-1,-1,-8]\\
        &\xrightarrow{-10\alpha_{2}}
        [-1,-10,0,9,-1,	-1,	-8]\xrightarrow{-9\alpha_{4}}
        [-1,-1,	9,-9,8,-1,-8]\\
        &\xrightarrow{-8\alpha_{5}}
        [-1,-1,9,-1,-8,7,-8]\xrightarrow{-7\alpha_{6}}
        [-1,-1,9,-1,-1,-7,-1]\\
        &\xrightarrow{-9\alpha_{3}}
        [8,-1,-9,8,-1,-7,-1]\xrightarrow{-8\alpha_{4}}
        [8,7,-1,-8,7,-7,-1]\\
        &\xrightarrow{-7\alpha_{5}}
        [8,7,-1,-1,-7,0,-1]\xrightarrow{-7\alpha_{2}}
        [8,	-7,	-1,	6,	-7,	0,	-1]\\
        &\xrightarrow{-6\alpha_{4}}
        [8,	-1,	5,	-6,	-1,	0,	-1]\xrightarrow{-5\alpha_{3}}
        [13,	-1,	-5,	-1,	-1,	0,	-1]\\
        &\xrightarrow{-13\alpha_{1}}
        [-13,	-1,	8,	-1,	-1,	0,	-1]\xrightarrow{-8\alpha_{3}}
        [-5,	-1,	-8,	7,	-1,	0,	-1]\\
        &\xrightarrow{-7\alpha_{4}}
        [-5,	6,	-1,	-7,	6,	0,	-1]\xrightarrow{-6\alpha_{5}}
        [-5,	6,	-1,	-1,	-6,	6,	-1]\\
        &\xrightarrow{-6\alpha_{6}}
        [-5,	6,	-1,	-1,	0,	-6,	5]\xrightarrow{-5\alpha_{7}}
        [-5,	6,	-1,	-1,	0,	-1,	-5]\\
        &\xrightarrow{-6\alpha_{2}}
        [-5,	-6,	-1,	5,	0,	-1,	-5]\xrightarrow{-5\alpha_{4}}
        [-5,	-1,	4,	-5,	5,	-1,	-5]\\
        &\xrightarrow{-5\alpha_{5}}
        [-5,	-1,	4,	0,	-5,	4,	-5]\xrightarrow{-4\alpha_{6}}
        [-5,	-1,	4,	0,	-1,	-4,	-1]\\
        &\xrightarrow{-4\alpha_{3}}
        [-1,	-1,	-4,	4,	-1,	-4,	-1]\xrightarrow{-4\alpha_{4}}
        [-1,	3,	0,	-4,	3,	-4,	-1]\\
        &\xrightarrow{-3\alpha_{5}}
        [-1,	3,	0,	-1,	-3,	-1,	-1]\xrightarrow{-2\alpha_{2}}
        [-1	,-3,	0,	2,	-3,	-1,	-1]\\
        &\xrightarrow{-2\alpha_{4}}
        [-1,	-1,	2,	-2,	-1,	-1,	-1]\xrightarrow{-2\alpha_{3}}
        [1,	-1,	-2,	0,	-1,	-1,	-1]\\
        &\xrightarrow{-\alpha_{1}}
        [-1,	-1,	-1,	0,	-1,-1,	-1]\\
        &:=\mu.
\end{align*}

        Thus we have 
        \begin{align*}
           w_\lambda=&[{6},{5},{6},{4},{5} ,{6},{3},{4},{5},{6}
,{7},{2},{4},{5},{6}  ,{7},{3},{4},{5},{6}
,{2},{4},{5},{3},{4}  ,{2},{1},{3},{4},{5},\\&
{6},{7},{2},{4},{5}  ,{6},{3},{4},{5},{2}
,{4},{3},{1},{3},{4}  ,{5},{6},{7},{2},{4}
,{5},{6},{3},{4},{5} ,{2},{4},{3},{1}]. 
        \end{align*}

        
When $k=2$, we have $\lambda=(0,1,2,3,4,-3,-\frac{9}{2},\frac{9}{2})$. Then we can write $\lambda=\omega_{1}+\omega_{2}+\omega_{3}+\omega_{4}+\omega_{5}+\omega_{6}-7\omega_{7}:=[1,1,1,1,1,1,-7]$. By Lemma \ref{find-w_lambda}, we have the following steps to find $w_{\lambda}$:
 \allowdisplaybreaks  
  \begin{align*}
   \lambda:&=[1,1,1,1,1,1,-7]\\
   &\xrightarrow{-\alpha_{6}}
        [1,	1,	1,	1,	2,	-1,	-6]\xrightarrow{-2\alpha_{5}}
        [1,	1,	1,	3,	-2,	1,	-6]\\
        &\xrightarrow{-\alpha_{6}}
      [1,	1,	1,	3,	-1,	-1,	-5]\xrightarrow{-3\alpha_{4}}
        [1,	4,	4,	-3,	2,	-1,	-5]\\
        &\xrightarrow{-2\alpha_{5}}
        [1,	4,	4,	-1,	-2,	1,	-5]\xrightarrow{-\alpha_{6}}
        [1,	4,	4,	-1,	-1,	-1,	-4]\\
        &\xrightarrow{-4\alpha_{3}}
        [5,	4,	-4,	3,	-1,	-1,	-4]\xrightarrow{-3\alpha_{4}}
        [5,	7,	-1,	-3,	2,	-1,	-4]\\
        &\xrightarrow{-2\alpha_{5}}
        [5,	7,	-1,	-1,	-2,	1,	-4]\xrightarrow{-\alpha_{6}}
        [5,	7,	-1,	-1,	-1,	-1,	-3]\\
        &\xrightarrow{-7\alpha_{2}}
        [5,	-7,	-1,	6,	-1,	-1,	-3]\xrightarrow{-6\alpha_{4}}
       [5,	-1,	5,	-6,	5,	-1,	-3]\\
       &\xrightarrow{-5\alpha_{5}}
        [5,	-1,	5,	-1,	-5,	4,	-3]\xrightarrow{-4\alpha_{6}}
        [5,	-1,	5,	-1,	-1,	-4,	1]\\
        &\xrightarrow{-\alpha_{7}}
        [5,	-1,	5,	-1,	-1,	-3,	-1]\xrightarrow{-5\alpha_{3}}
        [10,	-1,	-5,	4,	-1,	-3,	-1]\\
        &\xrightarrow{-4\alpha_{4}}
        [10,3,	-1,	-4,	3,	-3,	-1]\xrightarrow{-3\alpha_{5}}
        [10,3,	-1,	-1,	-3,	0,	-1]\\
        &\xrightarrow{-3\alpha_{2}}
        [10,-3,	-1,	2,	-3,	0,	-1]\xrightarrow{-2\alpha_{4}}
        [10,-1,1,	-2,	-1,	0,	-1]\\
        &\xrightarrow{-\alpha_{3}}
        [11,-1,-1,-1,	-1,0,	-1]\xrightarrow{-11\alpha_{1}}
        [-11,-1,10,-1,-1,0,-1]\\
        &\xrightarrow{-10\alpha_{3}}
        [-1,-1,-10,9,-1,0,-1]\xrightarrow{-9\alpha_{4}}
       [-1,8,-1,-9,8,0,-1]\\
       &\xrightarrow{-8\alpha_{5}}
        [-1,8,-1,-1,-8,8,-1]\xrightarrow{-8\alpha_{6}}
        [-1,8,-1,-1,0,-8,7]\\
        &\xrightarrow{-7\alpha_{7}}
        [-1,8,-1,-1,0,-1,-7]\xrightarrow{-8\alpha_{2}}
        [-1,-8,-1,7,0,-1,-7]\\
        &\xrightarrow{-7\alpha_{4}}
        [-1,-1,6,-7,7,-1,-7]\xrightarrow{-7\alpha_{5}}
        [-1,-1,6,0,-7,6,-7]\\
        &\xrightarrow{-6\alpha_{6}}
        [-1,-1,6,0,-1,-6,-1]\xrightarrow{-6\alpha_{3}}
        [5,-1,-6,6,-1,-6,-1]\\
        &\xrightarrow{-6\alpha_{4}}
        [5,5,0,-6,5,-6,-1]\xrightarrow{-5\alpha_{5}}
        [5,5,0,-1,-5,-1,-1]\\
        &\xrightarrow{-5\alpha_{2}}
        [5,-5,0,4,-5,-1,-1]\xrightarrow{-4\alpha_{4}}
       [5,-1,4,-4,-1,-1,-1]\\
       &\xrightarrow{-4\alpha_{3}}
       [9,-1,-4,0,-1,-1,-1]\xrightarrow{-9\alpha_{1}}
        [-9,-1,5,0,-1,-1,-1]\\
        &\xrightarrow{-5\alpha_{3}}
        [-4,	-1,	-5,	5,	-1,	-1,	-1]\xrightarrow{-5\alpha_{4}}
        [-4,	4,	0,	-5,	4,	-1,	-1]\\
        &\xrightarrow{-4\alpha_{5}}
         [-4	,4,	0,	-1,	-4,	3,	-1]\xrightarrow{-3\alpha_{6}}  
        [-4,	4,	0,	-1,	-1,	-3,	2]\\
        &\xrightarrow{-2\alpha_{7}}
        [-4,	4,	0,	-1,	-1,	-1,	-2]\xrightarrow{-4\alpha_{2}}
        [-4,	-4,	0,	3,	-1,	-1,	-2]\\
        &\xrightarrow{-3\alpha_{4}}
        [-4	,-1,	3,	-3	,2,	-1,	-2]\xrightarrow{-2\alpha_{5}}
        [-4,	-1,	3,	-1,	-2,	1,	-2]\\
        &\xrightarrow{-\alpha_{6}}
        [-4	,-1,	3,	-1,	-1,	-1,	-1]\xrightarrow{-3\alpha_{3}}
        [-1,	-1,	-3,	2,	-1,	-1,	-1]\\
        &\xrightarrow{-2\alpha_{4}}
        [-1	,1,	-1,	-2,	1,	-1,	-1]\xrightarrow{-\alpha_{5}}
        [-1,	1,	-1,	-1,	-1,	0,	-1]\\
        &\xrightarrow{-\alpha_{2}}
        [-1,	-1,	-1,	0,	-1,	0,	-1]\\
        &:=\mu.  
  \end{align*}

  Thus we have 
        \begin{align*}
           w_\lambda=&[{6},{5},{6},{4},{5},{6},{3},{4},{5},{6},{2},{4},{5},{6},{7},{3},{4},{5},{2},{4},{3},{1},{3},{4},{5},{6},\\&{7},{2},{4},{5},
{6},{3},{4},{5},{2},{4},{3},{1},{3},{4},{5},{6},{7},{2},{4},{5},{6},{3},{4},{5},{2}]. 
        \end{align*}

When $k=3$, we have $\lambda=(0,1,2,3,4,-7,-\frac{5}{2},\frac{5}{2})$. Then we can write $\lambda=\omega_{1}+\omega_{2}+\omega_{3}+\omega_{4}+\omega_{5}+\omega_{6}-11\omega_{7}:=[1,1,1,1,1,1,-11]$. By Lemma \ref{find-w_lambda}, we have the following steps to find $w_{\lambda}$:
   \allowdisplaybreaks            
\begin{align*}
    \lambda:&=[1,1,1,1,1,1,-11]\\
    &\xrightarrow{-\alpha_{6}}
        [1,	1,	1,	1,	2,	-1,	-10]\xrightarrow{-2\alpha_{5}}
        [1,	1,	1,	3,	-2,	1,	-10]\\
        &\xrightarrow{-\alpha_{6}}
        [1,	1,	1,	3,	-1,	-1,	-9]\xrightarrow{-3\alpha_{4}}
        [1,	4,	4,	-3,	2,	-1,	-9]\\
        &\xrightarrow{-2\alpha_{5}}
        [1,	4,	4,	-1,	-2,	1,	-9]\xrightarrow{-\alpha_{6}}
        [1,	4,	4,	-1,	-1,	-1,	-8]\\
        &\xrightarrow{-4\alpha_{3}}
        [5,	4,	-4,	3,	-1,	-1,	-8]\xrightarrow{-3\alpha_{4}}
        [5,	7,	-1,	-3,	2,	-1,	-8]\\
        &\xrightarrow{-2\alpha_{5}}
        [5,	7,	-1,	-1,	-2,	1,	-8]\xrightarrow{-\alpha_{6}}
        [5,	7,	-1,	-1,	-1,	-1,	-7]\\
        &\xrightarrow{-7\alpha_{2}}
        [5,	-7,	-1,	6,	-1,	-1,	-7]\xrightarrow{-6\alpha_{4}}
        [5,	-1,	5,	-6,	5,	-1,	-7]\\
        &\xrightarrow{-5\alpha_{5}}
        [5,	-1,	5,	-1,	-5,	4,	-7]\xrightarrow{-4\alpha_{6}}
        [5,	-1,	5,	-1,	-1,	-4,	-3]\\
        &\xrightarrow{-5\alpha_{3}}
        [10,	-1,	-5,	4,	-1,	-4,	-3]\xrightarrow{-4\alpha_{4}}
        [10,	3,	-1,	-4,	3,	-4,	-3]\\
        &\xrightarrow{-3\alpha_{5}}
        [10,	3,	-1,	-1,	-3,	-1,	-3]\xrightarrow{-3\alpha_{2}}
        [10,	-3,	-1,	2,	-3,	-1,	-3]\\
        &\xrightarrow{-2\alpha_{4}}
        [10,	-1,	1,	-2,	-1,	-1,	-3]\xrightarrow{-\alpha_{3}}
        [11,	-1,	-1,	-1,	-1,	-1,	-3]\\
        &\xrightarrow{-11\alpha_{1}}
        [-11,	-1,	10,	-1,	-1,	-1,	-3]\xrightarrow{-10\alpha_{3}}
        [-1,-1,	-10,	9,	-1,	-1,	-3]\\
        &\xrightarrow{-9\alpha_{4}}
        [-1,	8,	-1,	-9,	8,	-1,	-3]\xrightarrow{-8\alpha_{5}}
        [-1,	8,	-1,	-1,	-8,	7,	-3]\\
        &\xrightarrow{-7\alpha_{6}}
        [-1,	8,	-1,	-1,	-1,	-7,	4]\xrightarrow{-4\alpha_{7}}
        [-1,	8,	-1,	-1,	-1,	-3,	-4]\\
        &\xrightarrow{-8\alpha_{2}}
        [-1,	-8,	-1,	7,	-1,	-3,	-4]\xrightarrow{-7\alpha_{4}}
        [-1,	-1,	6,	-7,	6,	-3,	-4]\\
        &\xrightarrow{-6\alpha_{5}}
        [-1,	-1,	6,	-1,	-6,	3,	-4]\xrightarrow{-3\alpha_{6}}
        [-1,	-1,	6,	-1,	-3,	-3,	-1]\\
        &\xrightarrow{-6\alpha_{3}}
        [5,	-1,	-6,	5,	-3,	-3,	-1]\xrightarrow{-5\alpha_{4}}
        [5,	4,	-1,	-5,	2,	-3,	-1]\\
        &\xrightarrow{-2\alpha_{5}}
        [5,	4,	-1,	-3,	-2,	-1,	-1]\xrightarrow{-4\alpha_{2}}
        [5,	-4,	-1,	1,	-2,	-1,	-1]\\
        &\xrightarrow{-\alpha_{4}}
        [5,	-3,	0,	-1,	-1,	-1,	-1]\xrightarrow{-5\alpha_{1}}
        [-5,	-3,	5,	-1,	-1,	-1,	-1]\\
        &\xrightarrow{-5\alpha_{3}}
        [0,	-3,	-5,	4,	-1,	-1,	-1]\xrightarrow{-4\alpha_{4}}
        [0,	1,	-1,	-4,	3,	-1,	-1]\\
        &\xrightarrow{-3\alpha_{5}}
        [0,	1,	-1,	-1,	-3,	2,	-1]\xrightarrow{-2\alpha_{6}}
        [0,	1,	-1,	-1,	-1,	-2,	1]\\
        &\xrightarrow{-\alpha_{7}}
        [0,	1,	-1,	-1,	-1,	-1,	-1]\xrightarrow{-\alpha_{2}}
        [0,	-1,	-1,	0,	-1,	-1,	-1]\\
        &:=\mu.  
  \end{align*}
    
 Thus we have 
        \begin{align*}
           w_\lambda=&[{6},{5},{6},{4},{5},{6},{3},{4},{5},{6},{2},{4},{5},{6},{3},{4},{5},{2},{4},{3},{1},\\&{3},{4},{5},{6},{7},{2},{4},{5},{6},
{3},{4},{5},{2},{4},{1},{3},{4},{5},{6},{7},{2}]. 
        \end{align*}

\end{proof}

\begin{Cor}\label{e7-cell}
 When $\frg_{\mathbb{R}}=\fre_{7(-25)}$,  let $L(\lambda)$  be an integral highest weight Harish-Chandra module. 
 Then we have $V(L(\lambda))=V(L_{w})=\overline{\mathcal{O}}_{1}$  if and only if
	   \begin{align*}
           w_\lambda\stackrel{R}{\sim}w\stackrel{R}{\sim}&[{6},{5},{6},{4},{5},{6},{3},{4},{5},{6},{7},{2},{4},{5},{6},{7},{3},{4},{5},{6},{2},{4},{5},{3},{4},{2},{1},{3},{4},{5},\\
&{6},{7},{2},{4},{5},{6},{3},{4},{5},{2},{4},{3},{1},{3},{4},{5},{6},{7},{2},{4},{5},{6},{3},{4},{5},{2},{4},{3},{1}], 
        \end{align*}
and $V(L(\lambda))=V(L_{w})=\overline{\mathcal{O}}_{2}$  if and only if
	 \begin{align*}
           w_\lambda\stackrel{R}{\sim}w\stackrel{R}{\sim}&[{6},{5},{6},{4},{5},{6},{3},{4},{5},{6},{2},{4},{5},{6},{7},{3},{4},{5},{2},{4},{3},{1},{3},{4},{5},{6},\\
&{7},{2},{4},{5},{6},{3},{4},{5},{2},{4},{3},{1},{3},{4},{5},{6},{7},{2},{4},{5},{6},{3},{4},{5},{2}],
\end{align*}
and $V(L(\lambda))=V(L_{w})=\overline{\mathcal{O}}_{3}$ if and only if
	  \begin{align*}
           w_\lambda\stackrel{R}{\sim}w\stackrel{R}{\sim}&[{6},{5},{6},{4},{5},{6},{3},{4},{5},{6},{2},{4},{5},{6},{3},{4},{5},{2},{4},{3},{1},\\&{3},{4},{5},{6},{7},{2},{4},{5},{6},{3},{4},{5},{2},{4},{1},{3},{4},{5},{6},{7},{2}].
        \end{align*}

\end{Cor}

\begin{Thm}\label{size-e7}
When $\frg_{\mathbb{R}}=\fre_{7(-25)}$,
we use $\mathcal{C}_k$ to denote a Harish-Chandra cell consisting of  highest weight Harish-Chandra modules $L_w$ with associated variety $\overline{\mathcal{O}}_{k}$ for $0\leq k\leq 3$. Then we have
$$
\#\mathcal{C}_3=21, \#\mathcal{C}_2=27,\#\mathcal{C}_1=7 \text{~and ~}\#\mathcal{C}_0=1.$$
\end{Thm}

\begin{proof}
    See Example \ref{e7-cell-eq}.
\end{proof}
Note that the elements in $\mathcal{W}$ are described in \cite[Tab. 4]{Zie18}.

\section{The cases of type \texorpdfstring{$B_n$}{} and type \texorpdfstring{$C_n$}{}}\label{bc-thm}
In this section, we consider highest weight Harish-Chandra modules of types $B_n$ and $C_n$.
We use $W_n$ to denote their Weyl groups.

\subsection{The case of $\mathfrak{so}(2,2n-1)$}
In this subsection, we assume $\mathfrak{g}=\mathfrak{so}(2n+1,\mathbb{C})$ and $\mathfrak{g}_{\mathbb{R}}=\mathfrak{so}(2,2n-1)$. 
 From Enright--Howe--Wallach \cite{EHW}, we know that the highest weight module $L(\lambda)$ is an integral  Harish-Chandra module if and only if $\lambda_1-\lam_2\in \mathbb{Z}$, $\lam_i-\lam_{i+1}\in\mathbb{Z}_{>0}$ for $2\leq i\leq n-1$ and $2\lambda_k\in \mathbb{Z}_{>0}$ for $2\leq k\leq n$. Therefore, $L_w$ is a highest weight Harish-Chandra $SO(2,2n-1)$-module if and only if $-w\rho=(\lam_1,\lam_2,\cdots,\lam_n)$ with $\lam_2>\lam_3>\cdots>\lam_n>0$. The number of these modules is $|\mathcal{W}|=|W_n/W_{n-1}|=2n$.


First we consider the Wallach representations.

\begin{Thm}\label{b-2-cell}
Suppose $\frg_{\mathbb{R}}=\frso(2,2n-1)$ and $L(\lambda)$ is a highest weight Harish-Chandra module with highest weight $\lambda-\rho =-k c\zeta $. Suppose $w_{\lambda}\in W$ is the minimal length element such that $\mu=w_{\lambda}^{-1}\lambda$ is antidominant, then we have
\begin{enumerate}
    \item if $k=2$, then $$w_{\lambda}=(-1,-2, \dots ,-(n-2),n,-(n-1)) \text{~ and~} V(L(\lambda))=\overline{\mathcal{O}}_{2}.$$
     \item if $k=0$, then $$w_{\lambda}=-Id \text{~ and~} V(L(\lambda))=\overline{\mathcal{O}}_{0}.$$
\end{enumerate}

\end{Thm}
\begin{proof}
    From \cite{EHW}, we have $\lambda=-k c\zeta +\rho=(-k(n-\frac{3}{2})+(n-\frac{1}{2}),n-\frac{3}{2},\dots, \frac{1}{2})$.
  When $k=2$, $\lambda=(-(n-\frac{5}{2}),n-\frac{3}{2},n-\frac{5}{2},\dots,\frac{3}{2},\frac{1}{2}) $. By Proposition \ref{BCD}, we have $V(L(\lambda))=\overline{\mathcal{O}}_{2}.$ By Proposition \ref{anti},  we have $\mu=(-(n-\frac{5}{2}),-(n-\frac{5}{2}), -(n-\frac{3}{2}),\dots, -\frac{3}{2},-\frac{1}{2})$. Thus we have:
         $$\begin{tikzpicture}
     \draw(-1,-4) node{$ \mu^-: $};
      \draw (-1,0) node{$ \lambda^-: $};
    \node (num1) at (0.5,0) {$-(n-\frac{5}{2})$};
    \node (num2) at (2,0) {$n-\frac{3}{2}$};
    \node (num3) at (3,0) {$\dots$};
    \node (num4) at (4,0) {$\frac{3}{2}$};
    \node (num5) at (5,0) {$\frac{1}{2}$};
    \node (num6) at (6,0) {$-\frac{1}{2}$};
    \node (num7) at (7,0) {$-\frac{3}{2}$};
    \node (num8) at (8,0) {$\dots$};
    \node (num9) at (9,0) {$-(n-\frac{3}{2})$};
    \node (num10) at (10.5,0) {$n-\frac{5}{2}$};
    
    \node (num1') at (0.5,-4) {$-(n-\frac{3}{2})$};
    \node (num2') at (2,-4) {$-(n-\frac{5}{2})$};
    \node (num3') at (3,-4) {$\dots$};
    \node (num4') at (4,-4) {$-\frac{3}{2}$};
    \node (num5') at (5,-4) {$-\frac{1}{2}$};
    \node (num6') at (6,-4) {$\frac{1}{2}$};
    \node (num7') at (7,-4) {$\frac{3}{2}$};
    \node (num8') at (8,-4) {$\dots$};
    \node (num9') at (9,-4) {$n-\frac{5}{2}$};
    \node (num10') at (10.5,-4) {$n-\frac{3}{2}$};
    
    \draw(num1') -- (num9);
    \draw(num2') -- (num1);
    \draw(num4') -- (num7);
    \draw (num5') -- (num6);
   \draw(num6') -- (num5);
    \draw(num7') -- (num4);    
    \draw(num9') -- (num10);    
    \draw(num10') -- (num2);

    \draw[dashed](5.5,0)--(5.5,-4);
\end{tikzpicture}$$  
By Lemma \ref{lambda-w}, this diagram represents an element $$w_\lambda =(-1,-2, \dots ,-(n-2),n,-(n-1)).$$ By using the RS algorithm, we have
\[P(^{-}w_{\lambda})=\tiny{\begin{tikzpicture}[scale=0.8,baseline=-50pt]
			\hobox{0}{0}{-n}
			\hobox{0}{1}{-n+2}
			\hobox{0}{2}{\vdots}
                \hobox{0}{3}{n-1}
			\hobox{1}{0}{-n+1}
			\hobox{2}{0}{n}
   \end{tikzpicture}}.\]
 
When $k=0$, we have $\lambda=\rho$, thus $w_{\lambda}=-Id$.

    \end{proof}

In general, we have the following.
\begin{Thm}\label{b-cell-3}
Suppose $\frg_{\mathbb{R}}=\frso(2,2n-1)$ and $L(\lambda)$ is a highest weight Harish-Chandra module with highest weight $\lambda-\rho$.
Denote $\lambda=(\lam_{1},...\lam_{n})$.  Suppose $w_{\lambda}\in W$ is the minimal length element such that $\mu=w_{\lambda}^{-1}\lambda$ is antidominant, then we have 
\begin{enumerate}
    \item if $\lam_{i}\geq \lam_{1}>\lam_{i+1} (2\leq i<n)$, then $$w_{\lambda}=(-1, \dots, -(n-i),-n,-(n-i+1),\dots, -(n-1)) \text{~ and~} V(L(\lambda))=\overline{\mathcal{O}}_{2}.$$
     \item if $\lam_{n}\geq \lam_{1}>0$, then $$w_{\lambda}=(-n,-1,\dots,-(n-1 )) \text{~ and~} V(L(\lambda))=\overline{\mathcal{O}}_{2}.$$

     \item if $0\geq \lam_{1}>-\lam_{n}$, then $$w_{\lambda}=(n,-1,\dots, -(n-2),-(n-1 )) \text{~ and~} V(L(\lambda))=\overline{\mathcal{O}}_{2}.$$
     \item if   $-\lam_{i+1}\geq \lam_1>-\lam_i$ for some $2\leq i\leq n-1$, then $$w_{\lambda}=(-1,\dots, -(n-i),n,-(n-i+1),\dots, -(n-1)) \text{~ and~} V(L(\lambda))=\overline{\mathcal{O}}_{2}.$$
     \item if  $-\lam_{2}\geq \lam_1$, then 
     $$w_\lambda=(-1,\dots,-(n-1),n) \text{~ and~} V(L(\lambda))=\overline{\mathcal{O}}_{2}.$$
\end{enumerate}


\end{Thm}

\begin{proof}

If $\lam_{i}\geq \lam_{1}>\lam_{i+1} (2\leq i<n)$, we will have:
$$\tiny{\begin{tikzpicture}
     \draw(0,-4) node{$ \mu^-: $};
      \draw (0,0) node{$ \lambda^-: $};
    \node (num1) at (0.8,0) {$\lam_{1}$};
    \node (num2) at (1.6,0) {$\lam_{2}$};
    \node (num3) at (2.4,0) {$\dots$};
    \node (num4) at (3.2,0) {$\lam_{i}$};
    \node (num5) at (4,0) {$\lam_{i+1}$};
    \node (num6) at (5,0) {$\dots$};
    \node (num7) at (5.8,0) {$\lam_{n}$};
    \node (num8) at (6.8,0) {$-\lam_{n}$};
    \node (num9) at (7.6,0) {$\dots$};
    \node (num10) at (8.4,0) {$-\lam_{i+1}$};
    \node (num11) at (9.2,0) {$-\lam_{i}$};
      \node (num12) at (10,0) {$\dots$};
     \node (num13) at (10.8,0) {$-\lam_{2}$};
      \node (num14) at (11.6,0) {$-\lam_{1}$};

    \node (num1') at (0.8,-4) {$-\lam_{2}$};
    \node (num2') at (1.6,-4) {$\dots$};
    \node (num3') at (2.4,-4) {$-\lam_{i}$};
    \node (num4') at (3.2,-4) {$-\lam_{1}$};
    \node (num5') at (4,-4) {$-\lam_{i+1}$};
    \node (num6') at (5,-4) {$\dots$};
    \node (num7') at (5.8,-4) {$-\lam_{n}$};
    \node (num8') at (6.8,-4) {$\lam_{n}$};
    \node (num9') at (7.6,-4) {$\dots$};
    \node (num10') at (8.4,-4) {$\lam_{i+1}$};
    \node (num11') at (9.2,-4) {$\lam_{1}$};
    \node (num12') at (10,-4) {$\lam_{i}$};
    \node (num13') at (10.8,-4) {$\dots$};
    \node (num14') at (11.6,-4) {$\lam_{2}$};

    \draw(num1') -- (num13);
    \draw(num3') -- (num11);
    \draw(num4') -- (num14);
    \draw (num5') -- (num10);
   \draw(num7') -- (num8);
    \draw(num8') -- (num7);    
    \draw(num10') -- (num5);    
    \draw(num11') -- (num1);
    \draw(num12') -- (num4);
     \draw(num14') -- (num2);

    \draw[dashed](6.3,0.3)--(6.3,-4.3);
\end{tikzpicture}}$$

By Lemma \ref{lambda-w}, we can find that $$w_\lambda=(-1,\dots,-(n-i),-n,-(n-i+1),\dots,-(n-1)).$$ By using the RS algorithm, we have
\[P(^{-}w_{\lambda})=\tiny{\begin{tikzpicture}[scale=0.8,baseline=-50pt]
			\hobox{0}{0}{-n}
			\hobox{0}{1}{-n+2}
			\hobox{0}{2}{\vdots}
                \hobox{0}{3}{n-1}
			\hobox{1}{0}{-n+1}
			\hobox{1}{1}{n}
   \end{tikzpicture}} .\]
            
If $\lam_{n}\geq \lam_{1}>0$, we will have 
$$\begin{tikzpicture}
     \draw(0,-3) node{$ \mu^-: $};
      \draw (0,0) node{$ \lambda^-: $};
    \node (num1) at (1,0) {$\lam_{1}$};
    \node (num2) at (2,0) {$\lam_{2}$};
    \node (num3) at (3,0) {$\dots$};
    \node (num4) at (4,0) {$\lam_{n}$};
    \node (num5) at (5,0) {$-\lam_{n}$};
    \node (num6) at (6,0) {$\dots$};
    \node (num7) at (7,0) {$-\lam_{2}$};
    \node (num8) at (8,0) {$-\lam_{1}$};

    \node (num1') at (1,-3) {$-\lam_{2}$};
    \node (num2') at (2,-3) {$\dots$};
    \node (num3') at (3,-3) {$-\lam_{n}$};
    \node (num4') at (4,-3) {$-\lam_{1}$};
    \node (num5') at (5,-3) {$\lam_{1}$};
    \node (num6') at (6,-3) {$\lam_{n}$};
    \node (num7') at (7,-3) {$\dots$};
    \node (num8') at (8,-3) {$\lam_{2}$};

     \draw(num1') -- (num7);
    \draw(num3') -- (num5);
    \draw(num4') -- (num8);
    \draw (num5') -- (num1);
   \draw(num6') -- (num4);
    \draw(num8') -- (num2);    
    \draw[dashed](4.5,0.3)--(4.5,-3.3);
\end{tikzpicture}$$ 
By Lemma \ref{lambda-w}, we can find that $$w_{\lambda}=(-n,-1,\dots,-(n-1 )).$$  In this case, $P(^{-}w_{\lambda})$ is the same as the previous one.

If $0\geq \lam_{1}>-\lam_{n}$, we will have:
$$\begin{tikzpicture}
     \draw(0,-3) node{$ \mu^-: $};
      \draw (0,0) node{$ \lambda^-: $};
    \node (num1) at (1,0) {$\lam_{1}$};
    \node (num2) at (2,0) {$\lam_{2}$};
    \node (num3) at (3,0) {$\dots$};
    \node (num4) at (4,0) {$\lam_{n}$};
    \node (num5) at (5,0) {$-\lam_{n}$};
    \node (num6) at (6,0) {$\dots$};
    \node (num7) at (7,0) {$-\lam_{2}$};
    \node (num8) at (8,0) {$-\lam_{1}$};

            \node (num1') at (1,-3) {$-\lam_{2}$};
    \node (num2') at (2,-3) {$\dots$};
    \node (num3') at (3,-3) {$-\lam_{n}$};
    \node (num4') at (4,-3) {$\lam_{1}$};
    \node (num5') at (5,-3) {$-\lam_{1}$};
    \node (num6') at (6,-3) {$\lam_{n}$};
    \node (num7') at (7,-3) {$\dots$};
    \node (num8') at (8,-3) {$\lam_{2}$};

        \draw(num1') -- (num7);
    \draw(num3') -- (num5);
    \draw(num4') -- (num1);
    \draw (num5') -- (num8);
   \draw(num6') -- (num4);
    \draw(num8') -- (num2);    
    \draw[dashed](4.5,0.3)--(4.5,-3.3);
\end{tikzpicture}$$ 
By Lemma \ref{lambda-w}, we can find that $$w_\lambda=(n,-1,\dots,-(n-2),-(n-1)).$$  In this case, $P(^{-}w_{\lambda})$ is the same as the previous one.

If  $-\lam_{i+1}\geq \lam_1>-\lam_i$ for some $2\leq i\leq n$,  we will have:
$$\tiny{\begin{tikzpicture}
     \draw(0,-4) node{$ \mu^-: $};
      \draw (0,0) node{$ \lambda^-: $};
    \node (num1) at (0.7,0) {$\lam_{1}$};
    \node (num2) at (1.5,0) {$\lam_{2}$};
    \node (num3) at (2.3,0) {$\dots$};
    \node (num4) at (3.2,0) {$\lam_{i}$};
    \node (num5) at (4,0) {$\lam_{i+1}$};
    \node (num6) at (5,0) {$\dots$};
    \node (num7) at (5.8,0) {$\lam_{n}$};
    \node (num8) at (6.8,0) {$-\lam_{n}$};
    \node (num9) at (7.6,0) {$\dots$};
    \node (num10) at (8.4,0) {$-\lam_{i+1}$};
    \node (num11) at (9.3,0) {$-\lam_{i}$};
      \node (num12) at (10,0) {$\dots$};
     \node (num13) at (10.8,0) {$-\lam_{2}$};
      \node (num14) at (11.6,0) {$-\lam_{1}$};

        \node (num1') at (0.7,-4) {$-\lam_{2}$};
    \node (num2') at (1.5,-4) {$\dots$};
    \node (num3') at (2.3,-4) {$-\lam_{i}$};
    \node (num4') at (3.2,-4) {$\lam_{1}$};
    \node (num5') at (4,-4) {$-\lam_{i+1}$};
    \node (num6') at (4.9,-4) {$\dots$};
    \node (num7') at (5.8,-4) {$-\lam_{n}$};
    \node (num8') at (6.8,-4) {$\lam_{n}$};
    \node (num9') at (7.6,-4) {$\dots$};
    \node (num10') at (8.4,-4) {$\lam_{i+1}$};
    \node (num11') at (9.2,-4) {$-\lam_{1}$};
    \node (num12') at (10.1,-4) {$\lam_{i}$};
    \node (num13') at (10.8,-4) {$\dots$};
    \node (num14') at (11.6,-4) {$\lam_{2}$};
    
   \draw(num1') -- (num13);
    \draw(num3') -- (num11);
    \draw(num4') -- (num1);
    \draw (num5') -- (num10);
   \draw(num7') -- (num8);
    \draw(num8') -- (num7);    
    \draw(num10') -- (num5);    
    \draw(num11') -- (num14);
    \draw(num12') -- (num4);
     \draw(num14') -- (num2);

    \draw[dashed](6.3,0.3)--(6.3,-4.3);
\end{tikzpicture}}$$  

By Lemma \ref{lambda-w}, we can find that $$w_\lambda=(-1,\dots,-(n-i),n,-(n-i+1),\dots,-(n-1)).$$  By using the RS algorithm, we have
\[P(^{-}w_{\lambda})=\tiny{\begin{tikzpicture}[scale=0.8,baseline=-50pt]
			\hobox{0}{0}{-n}
			\hobox{0}{1}{-n+2}
			\hobox{0}{2}{\vdots}
                \hobox{0}{3}{n-1}
			\hobox{1}{0}{-n+1}
			\hobox{2}{0}{n}
   \end{tikzpicture}} .\]

If  $-\lam_{2}\geq \lam_1$,  we will have:
$${\begin{tikzpicture}
     \draw(0,-4) node{$ \mu^-: $};
      \draw (0,0) node{$ \lambda^-: $};
    \node (num1) at (0.7,0) {$\lam_{1}$};
    \node (num2) at (1.7,0) {$\lam_{2}$};
    \node (num3) at (2.7,0) {$\dots$};
      \node (num4) at (3.7,0) {$\lam_{n}$};
       \node (num5) at (4.7,0) {$-\lam_{n}$};
    \node (num6) at (5.7,0) {$\dots$};
       \node (num7) at (6.7,0) {$-\lam_{2}$};
      \node (num8) at (7.7,0) {$-\lam_{1}$};

        \node (num1') at (0.7,-4) {$\lam_{1}$};
    \node (num2') at (1.7,-4) {$-\lam_{2}$};
    \node (num3') at (2.7,-4) {$\dots$};
    \node (num4') at (3.7,-4) {$-\lam_{n}$};
       \node (num5') at (4.7,-4) {$\lam_{n}$};
       \node (num6') at (5.7,-4) {$\dots$};
    \node (num7') at (6.7,-4) {$\lam_{2}$};
    \node (num8') at (7.7,-4) {$-\lam_{1}$};

   \draw(num1') -- (num1);
    \draw(num2') -- (num7);
    \draw(num3') -- (num6);
    \draw (num4') -- (num5);
   \draw(num5') -- (num4);
    \draw(num6') -- (num3);    
    \draw(num7') -- (num2);    
    \draw(num8') -- (num8);

    \draw[dashed](4.2,0.3)--(4.2,-4.3);
\end{tikzpicture}}$$  

 By Lemma \ref{lambda-w}, we can find that $$w_\lambda=(-1,\dots,-(n-1),n).$$  In this case, $P(^{-}w_{\lambda})$ is the same as the previous one.

\end{proof}

By using Theorem \ref{hermitian av} or Garfinkle's domino tableau algorithm \cite{Garfinkle1}, we can see that these $2n-1$ $w_{\lam}$'s in Theorem \ref{b-cell-3} belong to the same KL right cell. By Proposition \ref{BCD}, no $L_w$ can have associated variety $\overline{\mathcal{O}}_1$. 

\begin{Cor}\label{b-cell}
  When $G=SO(2,2n-1)$, let $L(\lambda)$  be an integral highest weight Harish-Chandra module. 
 Then we have $V(L(\lambda))=V(L_{w})=\overline{\mathcal{O}}_{2}$ if and only if
	\begin{equation*}
	w_{\lambda}\stackrel{R}{\sim} w\stackrel{R}{\sim} (n,-1,\dots, -(n-2),-(n-1 )).
	\end{equation*}
 And $V(L(\lambda))=V(L_{w_{\lambda}})=\overline{\mathcal{O}}_{0}$ if and only if $w_{\lambda}=-Id$.
	
\end{Cor}

\begin{Cor}\label{size-b}
We use $\mathcal{C}_k$ to denote a Harish-Chandra cell consisting of  highest weight Harish-Chandra $SO(2,2n-1)$-modules $L_w$ with associated variety $\overline{\mathcal{O}}_{k}$. Then we have
$
\#\mathcal{C}_2=2n-1$ and $\#\mathcal{C}_0=1$.
\end{Cor}
\begin{proof}
    From $|\mathcal{W}|=|W_n/W_{n-1}|=2n=\#\mathcal{C}_0+\#\mathcal{C}_2$ and $\#\mathcal{C}_0=1$, we can get $\#\mathcal{C}_2=2n-1$ since there is only one KL cell such that $V(L_w)=\overline{\mathcal{O}}_{2}$ by Corollary \ref{b-cell}.
\end{proof}

Note that the $2n-1$ elements $L_w$ in $\mathcal{C}_2$ correspond to the $2n-1$ $w_{\lam}$'s in Theorem \ref{b-cell-3}.

When $\lam$ is a half integral  highest weight Harish-Chandra module of $SO(2,2n-1)$, the integral root system will be isomorphic to the root system of type $A_1\times B_{n-1}$ and the integral Weyl group $W_{[\lam]}$ will be isomorphic to $S_2\times W_{n-1}$.
From Proposition \ref{BCD} and Theorem \ref{b-cell-3}, we have the following result.

\begin{Cor}\label{b-half}
Suppose $\frg_{\mathbb{R}}=\frso(2,2n-1)$ and $L(\lambda)$ is a half integral highest weight Harish-Chandra module with highest weight $\lambda-\rho$. Write $w_{\lam}=w_A\times w_B$.
Then we have $V(L(\lambda))=\overline{\mathcal{O}}_{1}$ if and only if
	 $$w_A=s_1=(2,1),
$$
and $V(L(\lambda))=\overline{\mathcal{O}}_{2}$ if and only if  $$w_A=Id_{A_1}=(1,2).
$$

\end{Cor}
\begin{proof}
 We write $\lam=(\lam_1,\dots,\lam_n)$. From the construction of $L(\lam)$, we have $\lambda_1-\lam_2\in\frac{1}{2}+\mathbb{Z}$, $\lam_i-\lam_{i+1}\in\mathbb{Z}_{>0}$ for $2\leq i\leq n-1$ and $2\lambda_k\in \mathbb{Z}_{>0}$ for $2\leq k\leq n$ since $\lam$ is half integral. Thus we have two maximal subsequences $x=(\lam_1)$ and $y=(\lam_2,\dots,\lam_n)$. The corresponding   integral root system  $\Phi_{[\lam]}$ is isomorphic to the root system of type $A_1\times B_{n-1}$ and $w_{\lam}=w_{x}\times w_y=w_x\times (-Id)$.

  By Proposition \ref{BCD}, we have $V(L(\lambda))=\overline{\mathcal{O}}_{1}$ if and only if $x=\lam_1>0$, equivalently $w_A=s_1=(2,1)$. And $V(L(\lambda))=\overline{\mathcal{O}}_{2}$ if and only if $x=\lam_1\leq 0$, equivalently $w_A=Id_{A_1}=(1,2)$.

\end{proof}

\subsection{The case of $\mathfrak{sp}(n,\mathbb{R})$}
In this subsection, we assume $\mathfrak{g}=\mathfrak{sp}(n,\mathbb{C})$ and $\mathfrak{g}_{\mathbb{R}}=\mathfrak{sp}(n,\mathbb{R})$.
 From Enright--Howe--Wallach \cite{EHW}, we know that the highest weight module $L(\lambda)$ is an integral  Harish-Chandra module if and only if  $\lam_i-\lam_{i+1}\in \mathbb{Z}_{> 0}$ for $1\leq i\leq n-1$ and $\lam_k\in \mathbb{Z}$ for $1\leq k\leq n$. Therefore, $L_w$ is a highest weight Harish-Chandra $Sp(n,\mathbb{R})$-module if and only if $-w\rho=(\lam_1,\lam_2,\cdots,\lam_n)$ with $\lam_i-\lam_{i+1}\in \mathbb{Z}_{ >0}$ for $1\leq i\leq n-1$. The number of these modules is $|\mathcal{W}|=|W_n/S_{n}|=\frac{n!2^n}{n!}=2^n$.

First we consider the Wallach representations.

\begin{Thm}\label{c-cell-even}
Suppose $\frg_{\mathbb{R}}=\frsp(n,\mathbb{R})$ and $L(\lambda)$ is a highest weight Harish-Chandra module with highest weight $\lambda-\rho =-k c\zeta $. Suppose $w_{\lambda}\in W$ is the minimal length element such that $\mu=w_{\lambda}^{-1}\lambda$ is antidominant, then we have
$$w_\lambda=(\frac{k}{2},-\frac{k}{2}-1,\frac{k}{2}-1,-\frac{k}{2}-2,\frac{k}{2}-2,\dots, -(k-1),1,-k,-(k+1),\dots, -n) 
$$
if $k\in 2\mathbb{Z},1\leq k\leq n.$

\end{Thm}
\begin{proof}
    From \cite{EHW}, we have $\lambda=-k c\zeta +\rho=(n-\frac{k}{2},(n-1)-\frac{k}{2},\dots, 1-\frac{k}{2})$.
By Proposition \ref{BCD}, we have $V(L(\lambda))=\overline{\mathcal{O}}_{k}.$ By Proposition \ref{anti},  we have $\mu=(-(n-\frac{k}{2}),\dots,-\frac{k}{2},-(\frac{k}{2}-1),-(\frac{k}{2}-1),\dots,-1, -1,0)$. Thus we have:
 $$\tiny{\begin{tikzpicture}
     \draw(-1,-4) node{$ \mu^-: $};
      \draw (-1,0) node{$ \lambda^-: $};
    \node (num1) at (0,0) {$n-\frac{k}{2}$};
    \node (num2) at (1-0.1,0) {$\dots$};
    \node (num3) at (2-0.5,0) {$1$};
    \node (num4) at (2.5,0) {$0$};
    \node (num5) at (3,0) {$-1$};
    \node (num6) at (3.6,0) {$\dots$};
    \node (num7) at (4.5,0) {$1-\frac{k}{2}$};
    \node (num8) at (5.4,0) {$\frac{k}{2}-1$};
    \node (num9) at (6.4,0) {$\dots$};
    \node (num10) at (7.4,0) {$1$};
    \node (num11) at (8.3,0) {$0$};
    \node (num12) at (9.2,0) {$-1$};
    \node (num13) at (10.1,0) {$\dots$};
    \node (num14) at (11,0) {$-(n-\frac{k}{2})$};

    \node (num1') at (0,-4) {$-(n-\frac{k}{2})$};
    \node (num2') at (1-0.1,-4) {$\dots$};
    \node (num3') at (2-0.5,-4) {$1-\frac{k}{2}$};
    \node (num4') at (2.5,-4) {$\dots$};
    \node (num5') at (3,-4) {$-1$};
    \node (num6') at (3.6,-4) {$-1$};
    \node (num7') at (4.5,-4) {$0$};
    \node (num8') at (5.4,-4) {$0$};
    \node (num9') at (6.4,-4) {$1$};
    \node (num10') at (7.4,-4) {$1$};
    \node (num11') at (8.3,-4) {$\dots$};
    \node (num12') at (9.2,-4) {$\frac{k}{2}-1$};
    \node (num13') at (10.1,-4) {$\dots$};
    \node (num14') at (11,-4) {$n-\frac{k}{2}$};
    
    \draw(num1') -- (num14);
    \draw(num3') -- (num7);
   \draw(num5') -- (num5);
    \draw (num6') -- (num12);
   \draw(num7') -- (num4);
   \draw(num8') -- (num11); 
   \draw(num9') -- (num3); 
   \draw(num10') -- (num10); 
   \draw(num12') -- (num8);    
   \draw(num14') -- (num1);

    \draw[dashed](5,0.3)--(5,-4.3);
\end{tikzpicture}}$$
By Lemma \ref{lambda-w}, we can find that $$w_\lambda=(\frac{k}{2},-\frac{k}{2}-1,\frac{k}{2}-1,-\frac{k}{2}-2,\frac{k}{2}-2,\dots, -(k-1),1,-k,-(k+1),\dots, -n).$$  
By using the RS algorithm, we have
\[P(^{-}w_{\lambda})=\tiny{\begin{tikzpicture}[scale=0.8,baseline=-70pt]
			\hobox{0}{0}{-n}
			\hobox{0}{1}{-n+1}
			\hobox{0}{2}{\vdots}
                \hobox{0}{3}{-1}
               \hobox{0}{4}{k+1}
                \hobox{0}{5}{\vdots}
                \hobox{0}{6}{n}
			\hobox{1}{0}{1}
			\hobox{1}{1}{\vdots}
                 \hobox{1}{2}{k-1}
   \end{tikzpicture}} .\]

\end{proof}

Note that when $k=2$, $w_\lambda=(\frac{k}{2},-(\frac{k}{2}+1),\dots ,-(k-2),2,-(k-1),1,-k,\dots, -n)$ will be reduced to $(1,-2,-3,\dots,-n)$. When $k=4$, $w_\lambda$ will be reduced to $(2,-3,1,-4,-5,\dots,-n)$. When $k=n=2m$, $w_\lambda$ will be reduced to $(\frac{k}{2},-\frac{k}{2}-1,\frac{k}{2}-1,-\frac{k}{2}-2,\frac{k}{2}-2,\dots,-\frac{k}{2}-(\frac{k}{2}-1),\frac{k}{2}-(\frac{k}{2}-1),-n)=(\frac{k}{2},-\frac{k}{2}-1,\frac{k}{2}-1,-\frac{k}{2}-2,\frac{k}{2}-2,\dots,-n+1,1,-n)$.

\begin{Thm}\label{c-cell-odd}
Suppose $\frg_{\mathbb{R}}=\frsp(n,\mathbb{R})$ and $L(\lambda)$ is an integral highest weight Harish-Chandra module with highest weight $\lambda-\rho $ and $V(L(\lambda))=\overline{\mathcal{O}}_{n}$ with $n$ being odd. Suppose $w_{\lambda}\in W$ is the minimal length element such that $\mu=w_{\lambda}^{-1}\lambda$ is antidominant, then we have
$$w_\lambda\stackrel{R}{\sim}(n,n-1,\dots,2,1). 
$$


\end{Thm}
\begin{proof}
    Set $w=(n,n-1,\dots,2,1)$. Then $-w\rho=(-1,-2,\dots,n)$ and $V(L_w)=\overline{\mathcal{O}}_{n}$ by Proposition \ref{BCD}.

    For an integral highest weight Harish-Chandra module $L(\lam)$, if $V(L(\lambda))=V(L(w_{\lambda}))=\overline{\mathcal{O}}_{n}=V(L_w)$, we will have $w_{\lam}\stackrel{R}{\sim}w$ by Theorem \ref{hermitian av}.


\end{proof}

By using Theorem \ref{hermitian av}, we have the following result.

\begin{Cor}\label{c-cell}
  When $G=Sp(n,\mathbb{R})$, let $L(\lambda)$  be an integral highest weight Harish-Chandra module. 
 Then we have the followings:
 \begin{enumerate}
     \item suppose $k\in 2\mathbb{Z}$ and $1\leq k\leq n$, then $V(L(\lambda))=V(L_{w})=\overline{\mathcal{O}}_{k}$   if and only if
	$$w_{\lambda}\stackrel{R}{\sim}
w\stackrel{R}{\sim}(\frac{k}{2},-\frac{k}{2}-1,\frac{k}{2}-1,-\frac{k}{2}-2,\frac{k}{2}-2,\dots, -(k-1),1,-k,-(k+1),\dots, -n).
$$
     \item suppose $n$ is odd, then
 $V(L(\lambda))=V(L_w)=\overline{\mathcal{O}}_{n}$ if and only if 
$$w_\lambda\stackrel{R}{\sim}w\stackrel{R}{\sim}(n,n-1,\dots,2,1). 
$$
 \end{enumerate}


\end{Cor}

\begin{Cor}\label{size-c}
We use $\mathcal{C}_k$ to denote a Harish-Chandra cell consisting of  highest weight Harish-Chandra $Sp(n,\mathbb{R})$-modules $L_w$ with associated variety $\overline{\mathcal{O}}_{k}$ for some $k\in 2\mathbb{Z}$ and $0\leq k\leq n$. Then we have
$$
\#\mathcal{C}_k=\begin{cases}
   \binom{n+1}{\frac{k}{2}}, &\text{~if~} k\in 2\mathbb{Z} \text{~and~} 0\leq k\leq n,\\
    \binom{n}{\frac{n-1}{2}}, &\text{~if~} k=n \text{~is~odd}.
\end{cases}
$$

\end{Cor}
\begin{proof}
    This follows from \cite[Prop. 33]{BZ}.
\end{proof}

Note that the elements in $\mathcal{W}$ are described in \cite[\S 3]{BZ}.

When $\lam$ is a half integral  highest weight Harish-Chandra module of $Sp(n,\mathbb{R})$, the integral root system will be isomorphic to the root system of type $D_n$ and the integral Weyl group $W_{[\lam]}$ will be isomorphic to $W'_n$.
From Proposition \ref{BCD} and Theorem \ref{d-wlam}, we have the following result.

\begin{Cor}\label{c-half}
Suppose $\frg_{\mathbb{R}}=\frsp(n,\mathbb{R})$ and $L(\lambda)$ is a half integral highest weight Harish-Chandra module with highest weight $\lambda-\rho$. Then we have $V(L(\lambda))=\overline{\mathcal{O}}_{2k+1}$ for some $k\in \mathbb{Z}$ and $1\leq 2k+1\leq n$ if and only if
	 $$w_\lambda\stackrel{LR}{\sim}z_k:=\begin{cases}
    (k+1,-(k+2),k, -(k+3),k-1,\dots,&\\
    -2k,2,-(2k+1),1,-(2k+2),\dots, -n), &\text{~if~} n-k \text{~is~ odd},\\
     (k+1,-(k+2),k, -(k+3),k-1,\dots,&\\
     -2k,2,-(2k+1),-1,-(2k+2),\dots, -n), &\text{~if~} n-k \text{~is~ even}.
\end{cases}
$$
\end{Cor}
\begin{proof}
  When $w_{\lam}$ belongs to the given two-sided cell containing $z_k$,  we have  $q_2^{\ev}=k$ since there are $k$ even boxes in the second columns of the Young tableaux appeared in the argument of Theorem \ref{d-wlam}. By Proposition \ref{BCD}, we have $V(L(\lambda))=\overline{\mathcal{O}}_{2k+1}$.

    Conversely, suppose that $L(\lam)$ is a half integral highest weight Harish-Chandra module and we have $V(L(\lambda))=\overline{\mathcal{O}}_{2k+1}$.
    Then by Proposition \ref{BCD}, we have $k=q_2^{\ev}$ and $w_\lambda\stackrel{LR}{\sim}z_k$. 
\end{proof}

\subsection*{Acknowledgments}
 
Z. Bai was supported    by the National Natural Science Foundation of China  (Grant No. 12171344). Y. Bao was supported    by the National Natural Science Foundation of China  (Grant No. 11971315 and No. 11801117). X. Xie was supported  by the National Natural Science Foundation of China (Grant No. 12171030 and No. 12431002). We thank the referee for his/her helpful comments and suggestions.

\printbibliography

\end{document}